\numberwithin{equation}{section}
\newcommand{\nn}{\nonumber}
\newtheorem{theorem}{Theorem}[section]
\newtheorem{lemma}[theorem]{Lemma}
\theoremstyle{definition}
\newtheorem{definition}[theorem]{Definition}
\theoremstyle{remark}
\newcommand{\lb}{\llbracket}
\newcommand{\rb}{\rrbracket}
\newcommand{\Lb}{\{\hspace{-4.0pt}\{}
\newcommand{\Rb}{\}\hspace{-4.0pt}\}}
\numberwithin{equation}{section}
\def\bfnu{\boldsymbol{\nu}}
\def\calF{\mathcal{F}}
\def\calT{\mathcal{T}}
\begin{document}

\title{A $C^{0}$ interior penalty method for $m$th-Laplace equation}

\author{Huangxin Chen}
\address{School of Mathematical Sciences and Fujian Provincial Key Laboratory on Mathematical Modeling and 
High Performance Scientific Computing, Xiamen University, Fujian, 361005, China}
\email{chx@xmu.edu.cn}

\author{Jingzhi Li}
\address{Department of Mathematics, Southern University of
Science and Technology, Shenzhen, 518055, China.}
\email{li.jz@sustech.edu.cn}

\author{Weifeng Qiu}
\address{Department of Mathematics, City University of Hong Kong,
83 Tat Chee Avenue, Kowloon, Hong Kong, China}
\email{weifeqiu@cityu.edu.hk}


\thanks{The work of Huangxin Chen is 
supported by the NSF of China (Grant No. 12122115, 11771363). 
The work of Jingzhi Li  was partially supported by the NSF of China No. 11971221, Guangdong NSF Major Fund No. 2021ZDZX1001, 
the Shenzhen Sci-Tech Fund No. RCJC20200714114556020, JCYJ20200109115422828 and JCYJ20190809150413261,  
and Guangdong Provincial Key Laboratory of Computational Science
and Material Design No. 2019B030301001.
Weifeng Qiu's research is partially supported by the Research Grants Council of the Hong Kong Special Administrative Region, China. (Project No. CityU 11302219, CityU 11300621). The third author is corresponding author.} 

\begin{abstract}
In this paper, we propose a $C^{0}$ interior penalty method for $m$th-Laplace equation 
on bounded Lipschitz polyhedral domain in $\mathbb{R}^{d}$, where $m$ and $d$ can be any 
positive integers. The standard $H^{1}$-conforming piecewise $r$-th order polynomial space 
is used to approximate the exact solution $u$, where $r$ can be any integer greater than 
or equal to $m$. Unlike the interior penalty method in 
[T.~Gudi and M.~Neilan, {\em An interior penalty method for a sixth-order elliptic equation}, 
IMA J. Numer. Anal., \textbf{31(4)} (2011), pp. 1734--1753], we avoid computing $D^{m}$ 
of numerical solution on each element and high order normal derivatives of numerical 
solution along mesh interfaces. Therefore our method can be easily implemented. 
After proving discrete $H^{m}$-norm  bounded by the natural energy semi-norm associated with 
our method, we manage to obtain stability and optimal convergence with 
respect to discrete $H^{m}$-norm. {The error estimate under the low regularity assumption of the exact solution is also obtained.} Numerical experiments validate our theoretical estimate.
\end{abstract}

\subjclass[2000]{65N30, 65L12}

\keywords{$C^{0}$ interior penalty, $m$th-Laplace equation, stabilization, error estimates}

\maketitle

\section{Introduction}

We consider the $m$th-Laplace equation
\begin{subequations}
\label{m_Laplace_eqs}
\begin{align}
\label{m_Laplace_eq1}
& (-1)^{m} \Delta^{m} u =  f \quad \text{ in } \quad \Omega, \\
\label{m_Laplace_eq2}
& u =  \dfrac{\partial u}{\partial \bfnu} = \cdots = \dfrac{\partial^{m-1} u}{\partial \bfnu^{m-1}} = 0 
\quad \text{ on } \quad \partial \Omega,
\end{align}
\end{subequations}
where $m$ is an arbitrary positive integer, $\Omega$ is a bounded Lipschitz polyhedral domain in $\mathbb{R}^{d}$ 
($d=1,2,3,\cdots$), and $\bfnu$ is the outward unit normal vector field along $\partial \Omega$. The source term 
$f \in H^{-1}(\Omega)$.

Several works have been done to solve numerically (\ref{m_Laplace_eqs}). Standard $H^m$ conforming finite elements space requires $C^{m-1}$ continuity and leads to complicated construction of finite element space and lots of degrees of freedom when $m$ is large. Bramble and Zl\'amal \cite{Bramble1970} studied the $H^m$ conforming finite elements space on the two dimensional triangular meshes. Meanwhile, a $H^{m}$ conforming finite element space is developed by Hu and Zhang on rectangular grids 
for arbitrary $d$ in \cite{HuZhang2015}. Recently,  Hu, Lin and Wu introduce a $H^{m}$-conforming finite element space 
on simplicial meshes 
for any $d$ in \cite{HuLinWu2021}. The finite element space in \cite{HuLinWu2021} contains piecewise $r$-th order polynomials 
with $r \geq 2^{d}m + 1$. Therefore, the polynomial order of finite element space in \cite{HuLinWu2021} is quite big. 
Though up to this moment they have above mentioned restrictions, conforming $H^{m}$ finite element spaces 
are desirable in both theoretical analysis and practice. In order to simplify the construction of $H^m$ finite element 
space, alternative $H^{m}$ nonconforming finite element space is introduced in several works. In \cite{WangXu2013}, a $H^{m}$ nonconforming finite element space (named Morley-Wang-Xu elements) is introduced for $m \leq d$. Besides, Hu and Zhang also considered the $H^m$ nonconforming finite element space in \cite{HuZhang2017} on triangular meshes for $d=2$. The finite element space in \cite{WangXu2013} is generalized for $m = d +1$ by Wu and Xu in \cite{WuXu1}. Recently in \cite{WuXu2}, it is further generalized for arbitrary $m$ and $d$ but with stabilization along mesh interface in order to balance the weak continuity and the penalty terms.  In order to obtain stability and optimal convergence in some discrete $H^{m}$-norm, 
\cite{WangXu2013, WuXu1, WuXu2} 
propose to compute numerical approximation to $D^{m}u$, such that their implementation may become quite complicated 
as $m$ is large. The finite element spaces in \cite{Bramble1970, HuZhang2015} can be used to solve numerically 
(\ref{m_Laplace_eqs}) with any source term $f \in H^{-m}(\Omega)$. However, the implementation of these conforming and nonconforming finite element spaces can be quite challenging for large $m$. Virtual element methods have been investigated for (\ref{m_Laplace_eqs}). 
In \cite{Antonietti2020}, a conforming $H^{m}$ virtual element method is introduced for convex polygonal domain 
in $\mathbb{R}^{2}$. The finite element space in \cite{Antonietti2020} contains piecewise $r$-th order polynomials,  
where $r \geq 2m -1$. The virtual element method in \cite{Antonietti2020} needs strong assumption on 
regularity of $f$ ($f \in H^{r - m +1}(\Omega)$) to 
achieve optimal convergence (see \cite[Theorem~$4.2$]{Antonietti2020}).  In \cite{ChenHuang2020}, a nonconforming 
$H^{m}$ virtual element method is developed for bounded Lipschitz polyhedral domain in $\mathbb{R}^{d}$, where $d$ can 
be any positive integer. The design of finite element space in \cite{ChenHuang2020}, which contains piecewise $r$-th 
($r \geq m$) order polynomials, is based on a generalized Green's identity for $H^{m}$ inner product. 
It is assumed that $m \leq d$ in \cite{ChenHuang2020}. In \cite{Huang2020}, the virtual element method in 
\cite{ChenHuang2020} is extended for $m > d$. Besides above numerical methods based on 
primary formulations  of (\ref{m_Laplace_eqs}), a mixed formulation based on Helmholtz decomposition for tensor valued 
function is introduced in \cite{Mira2016} for two dimensional domain.

We propose a $C^{0}$ interior penalty method (\ref{IP_method}) for (\ref{m_Laplace_eqs}) for arbitrary positive 
integers $m$ and $d$. The finite element space of (\ref{IP_method}) is the standard $H^{1}$-conforming piecewise 
$r$-th order polynomials, where $r \geq m$. The design of (\ref{IP_method}) avoids computing $D^{m}$ 
of numerical solution on each element and high order normal derivatives of numerical solution along mesh interfaces. 
In fact, (\ref{IP_method}) only gets involved with calculation of high order multiplicity of Laplace 
of numerical solution ($\Delta^{i} u_{h}$ for $1\leq i \leq m$) and the gradient of high order multiplicity of 
Laplace of numerical solution ($\nabla \Delta^{i}u_{h}$ for $0 \leq i \leq m-1$) on both elements and mesh interfaces.
Therefore our method (\ref{IP_method}) can be easily implemented, even when $m$ is large and $d = 3$. 
After proving (Theorem~\ref{thm_stab}) that discrete $H^{m}$-norm (see Definition~\ref{def_discrete_H_m}) is bounded by 
the natural energy semi-norm associated with (\ref{IP_method}), we manage to show our method (\ref{IP_method}) 
has stability and optimal convergence on bounded Lipschitz polyhedral domain in $\mathbb{R}^{d}$  
with respect to the discrete $H^{m}$-norm, for any positive integers $m$ and $d$. Roughly speaking, we have 
\begin{align*}
\Vert u_{h}\Vert_{m, h} \leq & C \Vert f\Vert_{H^{-1}(\Omega)}, \\
\Vert u_{h} - u\Vert_{m, h} \leq & C h^{\min (r + 1 - m, s - m)} \Vert u \Vert_{H^{s}(\Omega)},
\end{align*}
where $s \geq 2m - 1$. We refer to Theorem~\ref{thm_wellposed} and Theorem~\ref{thm_conv} for detailed descriptions on stability 
and optimal convergence. The design and analysis of our method (\ref{IP_method}) can be easily generalized 
for nonlinear partial differential equations with $(-1)^{m}\Delta^{m}u$ as their leading term.  
We would like to point out that our method (\ref{IP_method}) is not a generalization 
of the interior penalty method for sixth-order elliptic equation ($m=3$) in \cite[($3.4$, $3.5$)]{GudiNeilan2011}. 
Actually, the method in \cite{GudiNeilan2011} needs to calculate numerical approximation to $D^{3}u$. 

{ If the exact solution of (\ref{m_Laplace_eqs}) is under the low regularity assumption, Gudi et al. have applied the analysis technique from the {\em a posteriori} error analysis to derive the error estimates for the interior penalty methods for the 2nd-order, 4th-order and 6th-order elliptic equations under the low regularity assumptions in \cite{Gudi2010,GudiNeilan2011}. In this paper, we shall extend the analysis by Gudi et al. for the proposed $C^0$ interior penalty methods for (\ref{m_Laplace_eqs}) when $m\geq 2$. Assuming $u \in H^m_0(\Omega)$ for (\ref{m_Laplace_eqs}), we have
\[
\Vert u - u_{h}\Vert_{m, h} \leq C   \inf_{v\in V_h}  \left( \| u-v\|_{m,h} + {\rm osc}_m(f)  \right),
\]
where $V_h$ is the $C^0$ conforming finite element space and ${\rm osc}_m(f)$ is the oscillation term defined in (\ref{def_osc_f}).
}

The numerical method considered in this paper works for any positive integers $m$ and $d$ from theoretical viewpoint. It can be applied to the practical high order equations. For instance, the modeling for plates in linear elasticity results in consideration of fourth-order partial differential equations \cite{Engel02}. Modeling in material science usually applies the fourth-order equation such as the Cahn-Hilliard equation \cite{ch1958,Elliott1986} and the sixth-order equation such as the thin-film equations \cite{Barrett2004} and the phase field crystal model \cite{Cheng2008,Wise2009,Wang_Wise2011}. Recently, an eighth-order equation was considered for the nonlinear Schr\"odinger equation in \cite{Kudryashov2020}. As mentioned in \cite{WangXu2013}, although there are rare practical applications for general higher order equations, the elliptic equations of order $m = d/2$ in any dimension have been used in differential geometry \cite{Chang2001}.  One can also extends the numerical methods and analysis for the solution of nonlinear 
Hamilton-Jacobi-Bellman equation and other phase-field models.

In the next section, we present the $C^{0}$ interior penalty method. In section 3, we prove stability and optimal convergence 
with respect to discrete $H^{m}$-norm (see Definition~\ref{def_discrete_H_m}). {In section 4, we show the error estimates under the low regularity assumption of the exact solution.} In section 5, we provide numerical experiments.

\section{$C^{0}$ interior penalty method}\label{sec:C0IP}
In this section, firstly we give notations to define the $C^{0}$ interior penalty method for (\ref{m_Laplace_eqs}). 
Then in section~\ref{sec_derivation_method}, we derive the $C^{0}$ interior penalty method for any $m \geq 1$. 
Finally in section~\ref{sec_examples_method}, we provide concrete examples of the method for $m=1,2,3,4$.

Let $\calT_{h}$ be a quasi-uniform conforming simplicial mesh of $\Omega$. Here we define $h = \max_{K \in 
\calT_{h}} h_{K}$ where $h_{K}$ is the diameter of the element $K\in \calT_{h}$. We denote by $\calF_{h}$, 
$\calF_{h}^{\text{int}}$ and $\calF_{h}^{\partial}$ the collections of all $(d-1)$-dimensional faces, 
interior faces and boundary faces of $\calT_{h}$, respectively. Obviously, $\calF_{h} = \calF_{h}^{\text{int}} 
\cup \calF_{h}^{\partial}$. For any positive integer $r$, 
we define {$V_{h} = H_{0}^{1}(\Omega) \cap P_{r}(\calT_{h})$}, where $P_{r}(\calT_{h}) 
= \{ v_{h} \in L^{2}(\Omega): v_{h}|_{K} \in P_{r}(K), \forall K \in \calT_{h} \}$. 

We introduce some trace operators. For any interior face $F \in \calF_{h}^{\text{int}}$, let $K^{-}, K^{+}\in \calT_{h}$ 
be two elements sharing $F$. We denote by $\bfnu^{-}$ and $\bfnu^{+}$ the outward unit normal vectors along $\partial K^{-}$ 
and $\partial K^{+}$, respectively. For scalar function $v:\Omega \rightarrow \mathbb{R}$ and vector field 
$\boldsymbol{\phi}: \Omega \rightarrow \mathbb{R}^{d}$, which may be discontinuous across $\calF_{h}^{\text{int}}$, 
we define the following quantities. For $v^{-}:= v |_{K^{-}}$, $v^{+}:= v |_{K^{+}}$, 
$\boldsymbol{\phi}^{-}:= \boldsymbol{\phi} |_{K^{-}}$ and 
$\boldsymbol{\phi}^{+}:= \boldsymbol{\phi} |_{K^{+}}$, we define 
\begin{align*}
& \Lb v \Rb = \dfrac{1}{2}\left( v^{-}|_{F} + v^{+}|_{F} \right), \quad \Lb \boldsymbol{\phi} \Rb 
= \dfrac{1}{2} \left( \boldsymbol{\phi}^{-}|_{F} + \boldsymbol{\phi}^{+}|_{F} \right),\\ 
& \lb v \rb = v^{-}\bfnu^{-}|_{F} + v^{+} \bfnu^{+}|_{F}, \quad \lb \boldsymbol{\phi} \rb = 
\boldsymbol{\phi}^{-}\cdot \bfnu^{-}|_{F} + \boldsymbol{\phi}^{+}\cdot \bfnu^{+}|_{F};  
\end{align*}    
if $F \in \partial K^{+} \cap \partial \Omega$, we define 
\begin{align*}
\Lb v \Rb = v^{+}|_{F}, \quad \Lb \boldsymbol{\phi} \Rb =\boldsymbol{\phi}^{+}|_{F} , \quad 
\lb v \rb = v^{+} \bfnu |_{F}, \quad \lb \boldsymbol{\phi} \rb = \boldsymbol{\phi}^{+}\cdot \bfnu |_{F}.
\end{align*}
{ We also define $\lb v\rb_J|_F = v^- |_F - v^+|_F$ for $F \in \calF^{\rm int}_h$ and $\lb v\rb_J|_F = v^+|_F$ for $F \in \partial \Omega$.}

\subsection{Derivation of $C^{0}$ interior penalty method}
\label{sec_derivation_method}  
We assume the exact solution $u \in H^{2m-1}(\Omega)$. For any $v_{h} \in V_{h}$, 
via $m$-times integrating by parts, 
\begin{align*}
& ((-1)^{m}\Delta^{m}u, v_{h})_{\Omega} \\
= &
\begin{cases}
& (\Delta^{\tilde{m}}u, \Delta^{\tilde{m}} v_{h})_{\calT_{h}} - \sum_{i=0}^{\tilde{m} - 1} 
\langle  \Delta^{\tilde{m}+i}u, \partial_{\bfnu} \Delta^{\tilde{m}- i - 1}v_{h} \rangle_{\partial \calT_{h}} 
+ \sum_{i=0}^{\tilde{m} - 2} \langle \partial_{\bfnu} \Delta^{\tilde{m}+i} u, 
\Delta^{\tilde{m} - i-1} v_{h} \rangle_{\partial \calT_{h}}, \\
& \qquad \text{ if } m = 2\tilde{m} \quad (m \text{ is an even number}); \\
& \\
& (\nabla \Delta^{\tilde{m}}u, \nabla \Delta^{\tilde{m}} v_{h})_{\calT_{h}}  
+ \sum_{i=0}^{\tilde{m} - 1} \langle \Delta^{\tilde{m}+i+1} u, 
\partial_{\bfnu} \Delta^{\tilde{m}- i -1} v_{h}  \rangle_{\partial \calT_{h}} 
 - \sum_{i=0}^{\tilde{m} -1} \langle \partial_{\bfnu} \Delta^{\tilde{m} + i}u, 
 \Delta^{\tilde{m} - i} v_{h} \rangle_{\partial \calT_{h}}, \\
& \qquad  \text{ if } m = 2\tilde{m}+1 \quad (m \text{ is an odd number}).
\end{cases}
\end{align*}  
Since $u \in H^{2m-1}(\Omega)$, for any $v_{h} \in V_{h}$, 
\begin{align}
\label{u_eq1}
& ((-1)^{m}\Delta^{m}u, v_{h})_{\Omega} \\
= &
\begin{cases}
& (\Delta^{\tilde{m}}u, \Delta^{\tilde{m}} v_{h})_{\calT_{h}} 
- \sum_{i=0}^{\tilde{m} -1} \langle \Lb \Delta^{\tilde{m} + i} u \Rb, 
\lb \nabla \Delta^{\tilde{m} - i -1}v_{h} \rb \rangle_{\calF_{h}} 
+ \sum_{i=0}^{\tilde{m} - 2} \langle \Lb \nabla \Delta^{\tilde{m} + i} u \Rb, 
 \lb \Delta^{\tilde{m} - i-1} v_{h}\rb  \rangle_{\calF_{h}}, \\
& \qquad  \text{ if } m = 2\tilde{m} \quad (m \text{ is an even number}); \\
& \\
& (\nabla \Delta^{\tilde{m}}u, \nabla \Delta^{\tilde{m}} v_{h})_{\calT_{h}}  
+ \sum_{i=0}^{\tilde{m} - 1} \langle \Lb \Delta^{\tilde{m} + i +1} u \Rb, 
\lb \nabla \Delta^{\tilde{m}-i-1} v_{h} \rb\rangle_{\calF_{h}} 
- \sum_{i=0}^{\tilde{m} - 1} \langle \Lb \nabla \Delta^{\tilde{m} + i} u\Rb, 
\lb \Delta^{\tilde{m} - i} v_{h} \rb \rangle_{\calF_{h}}, \\
\nonumber
& \qquad \text{ if } m = 2\tilde{m}+1 \quad (m \text{ is an odd number}). 
\end{cases}
\end{align}

(\ref{u_eq1}) inspires us to define the coupling term $C_{h}$ in Definition~\ref{def_couple}.

\begin{definition}
\label{def_couple}
For any $w_{h}, v_{h} \in V_{h}$, we define the coupling term $C_{h}(w_{h}, v_{h})$ along mesh interface $\calF_{h}$ by 
\begin{align*}
& C_{h} (w_{h}, v_{h}) \\
= &
\begin{cases}
& - \sum_{i=0}^{\tilde{m} -1} \langle \Lb \Delta^{\tilde{m} + i} w_{h} \Rb, 
\lb \nabla \Delta^{\tilde{m} - i -1}v_{h} \rb \rangle_{\calF_{h}} 
+ \sum_{i=0}^{\tilde{m} - 2} \langle \Lb \nabla \Delta^{\tilde{m} + i} w_{h} \Rb, 
 \lb \Delta^{\tilde{m} - i-1} v_{h}\rb  \rangle_{\calF_{h}}, 
 \quad \text{ if } m = 2\tilde{m}; \\
& \\ 
& \sum_{i=0}^{\tilde{m} - 1} \langle \Lb \Delta^{\tilde{m} + i +1} w_{h} \Rb, 
\lb \nabla \Delta^{\tilde{m}-i-1} v_{h} \rb\rangle_{\calF_{h}} 
- \sum_{i=0}^{\tilde{m} - 1} \langle \Lb \nabla \Delta^{\tilde{m} + i} w_{h} \Rb, 
\lb \Delta^{\tilde{m} - i} v_{h} \rb \rangle_{\calF_{h}}, 
\quad \text{ if } m = 2\tilde{m}+1.
\end{cases}
\end{align*}
\end{definition}

In order to define $C^{0}$ interior penalty method, we need the stabilization term $S_{h}$ 
in Definition~\ref{def_stab_new}. 

\begin{definition}
\label{def_stab_new}
For any $w_{h}, v_{h} \in V_{h}$, we define the stabilization term $S_{h}(w_{h}, v_{h})$ 
along mesh interface $\calF_{h}$ by
\begin{align*}
 & S_{h} (w_{h}, v_{h}) \\  
 = &
\begin{cases}
& \sum_{i=0}^{\tilde{m} -1} h^{-(4i + 1)} \langle \lb \nabla \Delta^{\tilde{m} - i -1}w_{h} \rb, 
\lb \nabla \Delta^{\tilde{m} - i -1}v_{h} \rb \rangle_{\calF_{h}} 
+ \sum_{i=0}^{\tilde{m} - 2} h^{-(4i + 3)} \langle \lb \Delta^{\tilde{m} - i-1} w_{h}\rb, 
 \lb \Delta^{\tilde{m} - i-1} v_{h}\rb  \rangle_{\calF_{h}}, \\
& \qquad \text{ if } m = 2\tilde{m} \quad (m \text{ is an even number});\\
& \\
& \sum_{i=0}^{\tilde{m} - 1} h^{-(4i+3)}\langle \lb \nabla \Delta^{\tilde{m}-i-1} w_{h} \rb, 
\lb \nabla \Delta^{\tilde{m}-i-1} v_{h} \rb \rangle_{\calF_{h}} 
+ \sum_{i=0}^{\tilde{m} - 1} h^{-(4i+1)} \langle \lb \Delta^{\tilde{m} - i} w_{h} \rb , 
\lb \Delta^{\tilde{m} - i} v_{h} \rb \rangle_{\calF_{h}}, \\
& \qquad \text{ if } m = 2\tilde{m} + 1 \quad (m \text{ is an odd number}).
\end{cases}
\end{align*}
\end{definition}
We would like to point out that $S_{h} (w_{h}, v_{h}) =0$ if $m=1$.

The $C^{0}$ interior penalty method is to find $u_{h} \in V_{h}$, such that for any $v_{h}$,
{  
\begin{align}
\label{IP_method}
a_h (u_{h}, v_{h}) = (f, v_{h})_{\Omega},
\end{align}
where
\begin{align}
\label{IP_method_bilinear_form}
\begin{cases}
& a_h (u_{h}, v_{h}) = (\Delta^{\tilde{m}} u_{h}, \Delta^{\tilde{m}} v_{h})_{\calT_{h}} + C_{h}(u_{h}, v_{h}) + C_{h}(v_{h}, u_{h}) 
+ \tau S_{h}(u_{h}, v_{h}) , \text{ if } m = 2 \tilde{m}; \\
& \\
& a_h (u_{h}, v_{h}) = (\nabla \Delta^{\tilde{m}} u_{h}, \nabla \Delta^{\tilde{m}} v_{h})_{\calT_{h}} + C_{h}(u_{h}, v_{h}) + C_{h}(v_{h}, u_{h}) 
+ \tau S_{h}(u_{h}, v_{h}) , \text{ if } m = 2 \tilde{m} + 1.
\end{cases}
\end{align}
}
Here the parameter $\tau \geq 1$ shall be large enough but independent of $h$. 

\subsection{Examples of $C^{0}$ interior penalty method}
\label{sec_examples_method}

\begin{itemize}

\item $m=1$

The $C^{0}$ interior penalty method for $-\Delta u = f$ is to find $u_{h} \in V_{h}$ satisfying 
\begin{align}
\label{method_m_1}
(\nabla u_{h}, \nabla v_{h})_{\Omega} = (f, v_{h})_{\Omega}, \qquad \forall v_{h} \in V_{h}.
\end{align}

\item $m=2$ 

The $C^{0}$ interior penalty method for $\Delta^{2}u = f$ is to find $u_{h} \in V_{h}$ satisfying 
\begin{align}
\label{method_m_2}
& (\Delta u_{h}, \Delta v_{h})_{\calT_{h}} 
- \langle \Lb \Delta u_{h} \Rb, \lb \nabla v_{h} \rb\rangle_{\calF_{h}}
- \langle \Lb \Delta v_{h} \Rb, \lb \nabla u_{h} \rb\rangle_{\calF_{h}} \\
\nonumber
& \qquad + \tau h^{-1}\langle \lb \nabla u_{h} \rb, \lb \nabla v_{h}\rb \rangle_{\calF_{h}} 
= (f, v_{h})_{\Omega}, \qquad \forall v_{h} \in V_{h}.
\end{align}
{Actually, (\ref{method_m_2}) is the $C^0$ interior penalty method which replaces the discontinuous finite element spaces in \cite{Mozolevski2007} with $C^0$ finite element space.}

\item $m=3$

The $C^{0}$ interior penalty method for $-\Delta^{3} u = f$ is to find $u_{h} \in V_{h}$ satisfying 
\begin{align}
\label{method_m_3}
& (\nabla\Delta u_{h}, \nabla\Delta v_{h})_{\calT_{h}} 
+ \left( \langle \Lb \Delta^{2} u_{h} \Rb, \lb \nabla v_{h} \rb\rangle_{\calF_{h}}
- \langle \Lb \nabla \Delta u_{h} \Rb, \lb \Delta v_{h} \rb\rangle_{\calF_{h}} \right) \\
\nonumber 
&\qquad + \left( \langle \Lb \Delta^{2} v_{h} \Rb, \lb \nabla u_{h} \rb\rangle_{\calF_{h}}
- \langle \Lb \nabla \Delta v_{h} \Rb, \lb \Delta u_{h} \rb\rangle_{\calF_{h}} \right) \\
\nonumber
& \qquad + \tau \left( h^{-3}\langle \lb \nabla u_{h} \rb, \lb \nabla v_{h}\rb \rangle_{\calF_{h}} 
+ h^{-1} \langle \lb \Delta u_{h} \rb, \lb \Delta v_{h}\rb \rangle_{\calF_{h}} \right)
= (f, v_{h})_{\Omega}, \qquad \forall v_{h} \in V_{h}.
\end{align}

It is easy to see that (\ref{method_m_3}) is quite different from the interior penalty method in \cite{GudiNeilan2011}.

\item $m=4$

The $C^{0}$ interior penalty method for $\Delta^{4} u = f$ is to find $u_{h} \in V_{h}$ satisfying 
\begin{align}
\label{method_m_4}
& (\Delta^{2} u_{h}, \Delta^{2} v_{h})_{\calT_{h}} \\
\nonumber
& \qquad + \left( - \langle \Lb \Delta^{3} u_{h} \Rb, \lb \nabla v_{h} \rb\rangle_{\calF_{h}}
+  \langle \Lb \nabla \Delta^{2} u_{h} \Rb, \lb \Delta v_{h} \rb\rangle_{\calF_{h}} 
- \langle \Lb \Delta^{2} u_{h} \Rb, \lb \nabla \Delta v_{h} \rb\rangle_{\calF_{h}} \right) \\
\nonumber 
&\qquad + \left( - \langle \Lb \Delta^{3} v_{h} \Rb, \lb \nabla u_{h} \rb\rangle_{\calF_{h}}
+  \langle \Lb \nabla \Delta^{2} v_{h} \Rb, \lb \Delta u_{h} \rb\rangle_{\calF_{h}} 
- \langle \Lb \Delta^{2} v_{h} \Rb, \lb \nabla \Delta u_{h} \rb\rangle_{\calF_{h}} \right) \\
\nonumber
& \qquad + \tau \left( h^{-5}\langle \lb \nabla u_{h} \rb, \lb \nabla v_{h}\rb \rangle_{\calF_{h}} 
+ h^{-3} \langle \lb \Delta u_{h} \rb, \lb \Delta v_{h}\rb \rangle_{\calF_{h}}  
+ h^{-1} \langle \lb \nabla \Delta u_{h} \rb, \lb \nabla \Delta v_{h} \rb \rangle_{\calF_{h}} \right) \\
\nonumber
& \qquad \qquad = (f, v_{h})_{\Omega}, \qquad \forall v_{h} \in V_{h}.
\end{align}

\end{itemize}

\section{Analysis}\label{sec:analysis}

In this section, firstly we prove Theorem~\ref{thm_stab}, which states the discrete $H^{m}$-norm  
(see Definition~\ref{def_discrete_H_m}) bounded by the natural energy semi-norm associated with 
the $C^{0}$ interior penalty method (\ref{IP_method}). Then we prove Theorem~\ref{thm_wellposed}, 
which shows the energy estimate of (\ref{IP_method}). Finally, we prove 
Theorem~\ref{thm_conv}, which gives optimal convergence of numerical approximation to $u$ in the 
discrete $H^{m}$-norm. {Throughout
this paper, $C$ with or without a subscript denotes a positive constant depending only on the property of $\Omega$, the shape regularity of the meshes and the degree of polynomial spaces. The constant $C$ can take on different values in different occurrences.}

\begin{definition}
\label{def_discrete_H_m}
For any integers $m \geq 2$, we define the discrete $H^{m}$-norm $\Vert v \Vert_{m,h}$ by 
\begin{align*}
& \Vert v \Vert_{m, h}^{2} = \sum_{i=0}^{m} \Vert D^{i} v\Vert_{L^{2}(\calT_{h})}^{2} 
+ \sum_{j = 1}^{m-1} h^{-(2m - 2j - 1)} \Vert \lb D^{j}v \rb_{J}\Vert_{L^{2}(\calF_{h})}^{2} \\
:= & \sum_{i=0}^{m} \sum_{K \in \calT_{h}} \Vert D^{i} v\Vert_{L^{2}(K)}^{2} 
+ \sum_{j = 1}^{m-1} \sum_{F \in \calF_{h}} h^{-(2m - 2j - 1)}  \Vert \lb D^{j}v \rb_{J}\Vert_{L^{2}(F)}^{2}, 
\quad \forall v \in H_{0}^{1}(\Omega) \cap H^{m}(\calT_{h}).  
\end{align*}
For any $F \in \calF_{h}^{\text{int}}$, there are two elements $K^{-}, K^{+} \in \calT_{h}$ sharing 
the common face $F$. We denote by $ v^{-} := v|_{K^{-}}$ and 
$v^{+} := v|_{K^{+}}$.  
We define 
\begin{align*}
& \Vert \lb D^{j}v \rb_{J}\Vert_{L^{2}(F)}^{2} \\
= & \sum_{1\leq k_{1},\cdots, k_j \leq d}
 \Vert \lb \partial_{x_{k_{1}}}\partial_{x_{k_{2}}}\cdots \partial_{x_{k_{j}}} v \rb_{J} \Vert_{L^{2}(F)}^2 \\
:= & \sum_{1\leq k_{1},\cdots, k_j \leq d}
\Vert  (\partial_{x_{k_{1}}}\partial_{x_{k_{2}}}\cdots \partial_{x_{k_{j}}} v^{-})|_{F} 
- (\partial_{x_{k_{1}}}\partial_{x_{k_{2}}}\cdots 
\partial_{x_{k_{j}}} v^{+})|_{F} \Vert_{L^{2}(F)}^{2}.
\end{align*}
For any $F \in \calF_{h}^{\partial}$, we define 
\begin{align*}
& \Vert \lb D^{j}v \rb_{J}\Vert_{L^{2}(F)}^{2} \\
= & \sum_{1\leq k_{1},\cdots, k_j \leq d}
 \Vert \lb \partial_{x_{k_{1}}}\partial_{x_{k_{2}}}\cdots \partial_{x_{k_{j}}} v \rb_{J} \Vert_{L^{2}(F)}^2  
:= \sum_{1\leq k_{1},\cdots, k_j \leq d}
\Vert  (\partial_{x_{k_{1}}}\partial_{x_{k_{2}}}\cdots \partial_{x_{k_{j}}} v)|_{F} \Vert_{L^{2}(F)}^{2}.
\end{align*}
\end{definition}

\subsection{Discrete $H^{m}$-norm bounded by natural energy semi-norm}
\label{sec_thm_stab}
The main result of section~\ref{sec_thm_stab} is Theorem~\ref{thm_stab}, which shows that the discrete $H^{m}$-norm 
(see Definition~\ref{def_discrete_H_m}) is bounded by the natural energy semi-norm associated with the $C^{0}$ 
interior penalty method (\ref{IP_method}). The proof of Theorem~\ref{thm_stab} is based on Lemma~\ref{lemma_stab} 
and Lemma~\ref{lemma_D_m_interior}.

\begin{lemma}
\label{lemma_stab}
For any integers $r \geq m\geq 2$, there is a constant $C>0$ such that
\begin{align}
\label{stab_interface_bound}
\sum_{j=1}^{m-1} h^{-(2m - 2j - 1)}\Vert \lb D^{j}v_{h} \rb_{J} 
\Vert_{L^{2}(\calF_{h})}^{2} \leq C S_{h}(v_{h}, v_{h}), \qquad \forall v_{h} \in V_{h}.
\end{align}
\end{lemma}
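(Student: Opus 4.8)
The plan is to prove the estimate \eqref{stab_interface_bound} by showing that the left-hand side, which sums over \emph{all} partial derivatives $\lb D^{j} v_{h} \rb_{J}$ of orders $j = 1, \dots, m-1$, is controlled by the right-hand side $S_{h}(v_{h}, v_{h})$, which only involves jumps of the specific quantities $\nabla \Delta^{i} v_h$ and $\Delta^{i} v_h$. The key observation is that $v_h \in V_h \subset H_0^1(\Omega)$ is globally $C^0$, so the full-gradient jumps $\lb \nabla v_h \rb$ reduce to jumps in the normal direction only; more generally, tangential derivatives of any continuous quantity are continuous across each face $F$. First I would set up an inductive scheme on the derivative order $j$: I want to bound $\Vert \lb D^{j} v_h \rb_J \Vert_{L^2(F)}$ using the jumps of $\nabla \Delta^{i} v_h$ and $\Delta^{i} v_h$ together with lower-order jump terms already controlled by the induction hypothesis.

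The main technical device will be the observation that on each element $K$, the full Hessian and higher derivative tensors are algebraically recoverable from $\Delta^i v_h$, $\nabla \Delta^i v_h$, and tangential derivatives along $F$, the latter of which are continuous across $F$ because $v_h$ and its appropriate traces are single-valued. Concretely, I would decompose each partial derivative $\partial_{x_{k_1}} \cdots \partial_{x_{k_j}} v_h$ on $F$ into a part built from tangential differentiation of lower-order normal data (which jumps only through lower-order terms) and a part involving the normal Laplacian-type quantities appearing in $S_h$. Because $v_h$ is piecewise polynomial on a quasi-uniform mesh, I would freely use inverse inequalities and scaled trace inequalities of the form $h^{-1}\Vert \cdot \Vert_{L^2(F)}^2 \lesssim \Vert \cdot \Vert_{L^2(K)}^2 + h^2 \Vert \nabla \cdot \Vert_{L^2(K)}^2$, matching the $h^{-(2m-2j-1)}$ weights on the left to the $h^{-(4i+1)}$ and $h^{-(4i+3)}$ weights in $S_h$ via the index correspondence $j \leftrightarrow \tilde m - i$ (and its shifts). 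Verifying that the exponents line up correctly in both the even case $m = 2\tilde m$ and the odd case $m = 2\tilde m + 1$ is a bookkeeping step I would treat case-by-case but not grind through in full.

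The hard part will be the \emph{completeness} argument: showing that the particular quantities penalized in $S_h$, namely only $\lb \nabla \Delta^i v_h \rb$ and $\lb \Delta^i v_h \rb$, together with the continuity of tangential derivatives, actually span all the mixed partial derivative jumps $\lb D^j v_h \rb_J$ appearing in the discrete $H^m$-norm. This is essentially an algebraic claim that the Laplacian powers and their gradients, restricted to a face and combined with tangential data, generate the full symmetric derivative tensor on $F$. I expect to establish this by an explicit induction that peels off one order of normal differentiation at a time: writing an arbitrary $D^j v_h$ in a local orthonormal frame adapted to $F$ (one normal direction $\bfnu$, $d-1$ tangential directions), expressing normal second derivatives through $\Delta v_h$ minus tangential second derivatives, and iterating. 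Each elimination either lands on a quantity controlled by $S_h$ or on a tangential derivative of a lower-order object already handled, with the induction closing because tangential derivatives preserve continuity and reduce normal-differentiation order. Keeping careful track that no uncontrolled high-order normal jump survives this reduction is the crux of the proof.
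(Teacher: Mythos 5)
Your proposal is correct and follows essentially the same route as the paper: fix a face, work in an orthonormal frame with one normal and $d-1$ tangential directions, exploit the $C^0$ continuity of $v_h$ so that tangential derivatives of jumps cost only an inverse-inequality factor $h^{-1}$ each, eliminate second normal derivatives via $\partial_{\bfnu}^2 v_h = \Delta v_h - \sum_k \partial_{t_k}^2 v_h$, and close an induction on the derivative order that matches the $h$-weights of $S_h$. The paper organizes this as a preliminary claim reducing $\lb D^l v_h\rb_J$ to pure normal-derivative jumps followed by an alternating odd/even induction, but the ideas and the exponent bookkeeping are the same as in your plan.
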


\begin{proof}
We choose $F \in \calF_{h}$ arbitrarily. There is an orthonormal coordinate system $\{ y_{k}\}_{k=1}^{d}$ 
such that the $y_{d}$-axis is parallel to normal vector along $F$. Therefore $y_{1}$-axis, 
$\cdots$, $y_{d-1}$-axis are all parallel to $F$. 

We claim that for any $1 \leq l \leq m$, there is a positive integer $C^{\prime}$ such that 
\begin{align}
\label{stab_interface_ineq1}
\Vert \lb D^{l} \tilde{v}_{h} \rb_{J} \Vert_{L^{2}(F)}^{2} 
\leq C^{\prime}\left( \sum_{j=0}^{l} h^{-(2l -2j)} \Vert \lb \partial_{y_{d}}^{j} \tilde{v}_{h}  
\rb_{J}  \Vert_{L^{2}(F)}^{2} \right), \quad \forall \tilde{v}_{h} \in P_{r}(\calT_{h}).
\end{align}
{ We prove (\ref{stab_interface_ineq1}) by induction.} When $l=1$, it is easy to see 
\begin{align*}
\Vert \lb D \tilde{v}_{h} \rb_{J} \Vert_{L^{2}(F)}^{2} 
= \sum_{k=1}^{d} \Vert \lb \partial_{x_{k}} \tilde{v}_{h}\rb_{J} \Vert_{L^{2}(F)}^{2} 
= \sum_{k=1}^{d} \Vert \lb \partial_{y_{k}} \tilde{v}_{h}\rb_{J} \Vert_{L^{2}(F)}^{2}. 
\end{align*}
By discrete inverse inequality and the fact that $y_{1}$-axis, 
$\cdots$, $y_{d-1}$-axis are all parallel to $F$, we have that 
\begin{align*}
\sum_{k=1}^{d-1} \Vert \lb \partial_{y_{k}} \tilde{v}_{h}\rb_{J} \Vert_{L^{2}(F)}^{2}
\leq C h^{-2} \Vert \lb \tilde{v}_{h} \rb_{J} \Vert_{L^{2}(F)}^{2}.
\end{align*}
Therefore we have 
\begin{align*}
\Vert \lb D \tilde{v}_{h} \rb_{J} \Vert_{L^{2}(F)}^{2}  
\leq C \left( h^{-2} \Vert \lb \tilde{v}_{h} \rb_{J} \Vert_{L^{2}(F)}^{2} 
+ \Vert \lb \partial_{y_{d}} \tilde{v}_{h} \rb_{J} \Vert_{L^{2}(F)}^{2} \right).
\end{align*}
Thus (\ref{stab_interface_ineq1}) holds when $l=1$. 
We assume that (\ref{stab_interface_ineq1}) holds for any $l<m$. 
Then by discrete inverse inequality and the fact that $y_{1}$-axis, 
$\cdots$, $y_{d-1}$-axis are all parallel to $F$,
\begin{align*}
& \Vert \lb D^{l+1} \tilde{v}_{h} \rb_{J} \Vert_{L^{2}(F)}^{2} 
= \sum_{k=1}^{d} \Vert \lb \partial_{x_{k}} D^{l} \tilde{v}_{h} \rb_{J} \Vert_{L^{2}(F)}^{2}
= \sum_{k=1}^{d} \Vert \lb \partial_{y_{k}} D^{l} \tilde{v}_{h} \rb_{J} \Vert_{L^{2}(F)}^{2}\\
\leq & C \left(h^{-2} \Vert \lb D^{l} \tilde{v}_{h} \rb_{J} \Vert_{L^{2}(F)}^{2} 
+ \Vert \lb D^{l}(\partial_{y_{d}} \tilde{v}_{h}) \rb_J \Vert_{L^{2}(F)}^{2} \right).
\end{align*}
Since $\tilde{v}_{h} \in P_{r}(\calT_{h})$, then $\partial_{y_{d}} \tilde{v}_{h} \in P_{r}(\calT_{h})$. 
Since we assume (\ref{stab_interface_ineq1}) holds for $l$, we have 
\begin{align*}
& \Vert \lb D^{l}(\partial_{y_{d}} \tilde{v}_{h}) \rb_J \Vert_{L^{2}(F)}^{2} 
\leq C \left( \sum_{j=0}^{l} h^{-(2l -2j)} \Vert \lb \partial_{y_{d}}^{j} 
(\partial_{y_{d}}\tilde{v}_{h}) \rb_{J} \Vert_{L^{2}(F)}^{2} \right), \\
& h^{-2}\Vert \lb D^{l} \tilde{v}_{h} \rb_{J} \Vert_{L^{2}(F)}^{2} 
\leq C\left( \sum_{j=0}^{l} h^{-(2(l+1) -2j)} \Vert \lb \partial_{y_{d}}^{j} \tilde{v}_{h}  
\rb_{J}  \Vert_{L^{2}(F)}^{2} \right).
\end{align*}
Therefore (\ref{stab_interface_ineq1}) holds for $l+1$. Thus we can conclude that 
the claim (\ref{stab_interface_ineq1}) is true. 

{ Now we start to prove (\ref{stab_interface_bound}) by induction.} Since $\Vert \lb \partial_{y_{d}} v_{h}\rb_{J} \Vert_{L^{2}(F)} 
= \Vert \lb \nabla v_{h} \rb \Vert_{L^{2}(F)}$, (\ref{stab_interface_ineq1}) and the fact 
$v_{h} \in H_{0}^{1}(\Omega)$ imply
\begin{align}
\label{stab_m_1}
\Vert \lb D v_{h} \rb_{J} \Vert_{L^{2}(F)} = \Vert \lb \nabla v_{h} \rb \Vert_{L^{2}(F)}.
\end{align}
{Since $F\in \calF_{h}$ is chosen arbitrarily, (\ref{stab_m_1}) implies that 
(\ref{stab_interface_bound}) holds when $m=2$}. 

Applying (\ref{stab_interface_ineq1}) with $l = 2$, we have 
\begin{align}
\label{D2_ineq}
 & \Vert \lb D^{2} v_{h} \rb_{J} \Vert_{L^{2}(F)}^{2} \\ 
\nonumber
\leq & C \left( h^{-4} \Vert \lb v_{h}\rb_{J} \Vert_{L^{2}(F)}^{2} 
+ h^{-2}\Vert \lb \partial_{y_{d}} v_{h} \rb_{J} \Vert_{L^{2}(F)}^{2} 
+ \Vert \lb \partial_{y_{d}}^{2} v_{h} \rb_{J} \Vert_{L^{2}(F)}^{2} \right) \\ 
\nonumber
= & C \left( h^{-2}\Vert \lb \nabla v_{h} \rb \Vert_{L^{2}(F)}^{2} 
+  \Vert \lb \partial_{y_{d}}^{2} v_{h} \rb_{J} \Vert_{L^{2}(F)}^{2}\right).
\end{align}
The last equality in (\ref{D2_ineq}) holds since $v_{h} \in H_{0}^{1}(\Omega)$ and 
$\Vert \lb \partial_{y_{d}} v_{h}\rb_{J} \Vert_{L^{2}(F)} 
= \Vert \lb \nabla v_{h} \rb \Vert_{L^{2}(F)}$. 
We notice that 
\begin{align}
\label{laplace_orthonormal}
\Delta v_{h} = \left( \partial_{y_{1}}^{2} v_{h} + \cdots + \partial_{y_{d-1}}^{2} v_{h} \right) 
+ \partial_{y_{d}}^{2} v_{h}. 
\end{align}
Since $y_{1}$-axis, $\cdots$, $y_{d-1}$-axis are all parallel to $F$, discrete inverse inequality implies 
\begin{align*}
& \Vert \lb (\partial_{y_{1}}^{2} v_{h} + \cdots + \partial_{y_{d-1}}^{2} v_{h}) 
\rb \Vert_{L^{2}(F)}^{2} \\
= & \Vert \lb(\partial_{y_{1}}^{2} v_{h} + \cdots + \partial_{y_{d-1}}^{2} v_{h})  
\rb_{J} \Vert_{L^{2}(F)}^{2} \\
\leq & C h^{-2} \Vert \lb D v_{h}\rb_{J} \Vert_{L^{2}(F)}^{2} 
= C h^{-2} \Vert \lb \nabla v_{h}\rb \Vert_{L^{2}(F)}^{2}.
\end{align*}
By (\ref{laplace_orthonormal}) and the above inequality, we have 
\begin{align}
\label{laplace_normal}
\Vert \lb \partial_{y_{d}}^{2} v_{h}\rb_{J} \Vert_{L^{2}(F)}^{2} 
\leq C \left( h^{-2} \Vert \lb \nabla v_{h}\rb \Vert_{L^{2}(F)}^{2} 
 + \Vert \lb \Delta v_{h} \rb \Vert_{L^{2}(F)}^{2} \right).
\end{align} 
By (\ref{D2_ineq}, \ref{laplace_normal}), we have 
\begin{align}
\label{stab_m_2}
\Vert \lb D^{2} v_{h} \rb_{J} \Vert_{L^{2}(F)}^{2} 
\leq C \left( h^{-2}\Vert \lb \nabla v_{h} \rb \Vert_{L^{2}(F)}^{2}  
+ \Vert \lb \Delta v_{h} \rb \Vert_{L^{2}(F)}^{2} \right).
\end{align}
Since $F\in \calF_{h}$ is chosen arbitrarily, (\ref{stab_m_1}, \ref{stab_m_2}) imply that 
(\ref{stab_interface_bound}) holds when {$m=3$}. 

We assume that $1 \leq l < m$ is an odd number, $l = 2\tilde{l} + 1$ and 
\begin{align}
\label{induction_odd}
& \Vert \lb D^{l}v_{h} \rb_{J} \Vert_{L^{2}(F)}^{2} \\
\nonumber
\leq & C \big(\sum_{i=0}^{\tilde{l}} h^{-4i} \Vert \lb \nabla \Delta^{\tilde{l}-i} v_{h}\rb \Vert_{L^{2}(F)}^{2} 
+ \sum_{i=0}^{\tilde{l} - 1} h^{-(4i+2)} \Vert \lb \Delta^{\tilde{l}-i} v_{h}\rb \Vert_{L^{2}(F)}^{2}\big).
\end{align}
Then by applying (\ref{induction_odd}) for each $\partial_{y_{k}}v_{h}$, we have 
\begin{align*}
& \Vert \lb D^{l+1}v_{h} \rb_{J} \Vert_{L^{2}(F)}^{2} 
= \sum_{k = 1}^{d} \Vert \lb D^{l}(\partial_{x_{k}} v_{h}) \rb_{J}\Vert_{L^{2}(F)}^{2} 
= \sum_{k = 1}^{d} \Vert \lb D^{l}(\partial_{y_{k}}v_{h}) \rb_{J}\Vert_{L^{2}(F)}^{2} \\
\leq & C \sum_{k = 1}^{d} \left (\sum_{i=0}^{\tilde{l}} h^{-4i} \Vert \lb \nabla \Delta^{\tilde{l}-i} 
(\partial_{y_{k}} v_{h}) \rb \Vert_{L^{2}(F)}^{2} 
+ \sum_{i=0}^{\tilde{l} - 1} h^{-(4i+2)} \Vert \lb \Delta^{\tilde{l}-i} 
(\partial_{y_{k}} v_{h})\rb \Vert_{L^{2}(F)}^{2}\right).
\end{align*}
Here $2\tilde{l} + 1 = l$. 
Since $y_{1}$-axis, $\cdots$, $y_{d-1}$-axis are all parallel to $F$, discrete inverse inequality implies 
\begin{align*}
& \Vert \lb D^{l+1}v_{h} \rb_{J} \Vert_{L^{2}(F)}^{2}  \\
\leq & C h^{-2} \left(\sum_{i=0}^{\tilde{l}} h^{-4i} \Vert \lb \nabla \Delta^{\tilde{l}-i} 
v_{h} \rb \Vert_{L^{2}(F)}^{2} 
+ \sum_{i=0}^{\tilde{l} - 1} h^{-(4i+2)} \Vert \lb \Delta^{\tilde{l}-i} 
v_{h} \rb \Vert_{L^{2}(F)}^{2}\right) \\
& \qquad + C \left(\sum_{i=0}^{\tilde{l}} h^{-4i} \Vert \lb \nabla \Delta^{\tilde{l}-i} 
(\partial_{y_{d}} v_{h}) \rb \Vert_{L^{2}(F)}^{2} 
+ \sum_{i=0}^{\tilde{l} - 1} h^{-(4i+2)} \Vert \lb \Delta^{\tilde{l}-i} 
(\partial_{y_{d}} v_{h})\rb \Vert_{L^{2}(F)}^{2}\right).
\end{align*}
Again by the fact that $y_{1}$-axis, $\cdots$, $y_{d-1}$-axis are all parallel to $F$, we have that 
for any $0 \leq i \leq \tilde{l}$, 
\begin{align*}
& \Vert \lb \nabla \Delta^{\tilde{l}-i} (\partial_{y_{d}} v_{h}) \rb \Vert_{L^{2}(F)}^{2} 
= \Vert \lb \nabla (\partial_{y_{d}} \Delta^{\tilde{l}-i} v_{h}) \rb_{J} \Vert_{L^{2}(F)}^{2} \\
= & \left( \Vert \lb \partial_{y_{1}}(\partial_{y_{d}} \Delta^{\tilde{l}-i} v_{h}) \rb_{J} \Vert_{L^{2}(F)}^{2} 
+ \cdots + \Vert \lb \partial_{y_{d-1}} (\partial_{y_{d}} \Delta^{\tilde{l}-i} v_{h}) \rb_{J} \Vert_{L^{2}(F)}^{2}\right) 
+ \Vert \lb \partial_{y_{d}}^{2} (\Delta^{\tilde{l}-i} v_{h}) \rb_{J} \Vert_{L^{2}(F)}^{2} \\
\leq & C \left( h^{-2}  \Vert \lb \partial_{y_{d}} \Delta^{\tilde{l}-i} v_{h} \rb_{J} \Vert_{L^{2}(F)}^{2} 
+ \Vert \lb  \partial_{y_{d}}^{2} (\Delta^{\tilde{l}-i} v_{h}) \rb_{J} \Vert_{L^{2}(F)}^{2} \right)\\
\leq & C \left( h^{-2}  \Vert \lb \nabla \Delta^{\tilde{l}-i} v_{h} \rb_{J} \Vert_{L^{2}(F)}^{2} 
+ \Vert \lb  \partial_{y_{d}}^{2} (\Delta^{\tilde{l}-i} v_{h}) \rb_{J} \Vert_{L^{2}(F)}^{2} \right) \\
\leq & C \left( h^{-2}  \Vert \lb \nabla \Delta^{\tilde{l}-i} v_{h} \rb_{J} \Vert_{L^{2}(F)}^{2} 
+ \Vert \lb  \Delta^{\tilde{l}-i + 1} v_{h} \rb_{J} \Vert_{L^{2}(F)}^{2} \right).
\end{align*}
We have applied (\ref{laplace_normal}) for $\Delta^{\tilde{l}-i} v_{h}$ to obtain last inequality. 
We also notice that for any $0 \leq i \leq \tilde{l}-1$,
\begin{align*}
\Vert \lb \Delta^{\tilde{l} - i} (\partial_{y_{d}} v_{h}) \rb \Vert_{L^{2}(F)}^{2} 
= \Vert \lb \partial_{y_{d}}(\Delta^{\tilde{l} - i}v_{h}) \rb_{J} \Vert_{L^{2}(F)}^{2} 
= \Vert \lb \nabla \Delta^{\tilde{l} - i}v_{h} \rb \Vert_{L^{2}(F)}^{2}.
\end{align*}
Therefore we have that (\ref{induction_odd}) implies 
\begin{align}
\label{stab_m_l_plus_1_even}
 \Vert \lb D^{l+1}v_{h} \rb_{J} \Vert_{L^{2}(F)}^{2} 
\leq C \left( \sum_{i=0}^{\tilde{l}} h^{-4i}\Vert \lb \Delta^{\tilde{l}- i +1} v_{h} \rb\Vert_{L^{2}(F)}^{2}  
+ \sum_{i=0}^{\tilde{l}} h^{-(4i+2)} \Vert \lb \nabla \Delta^{\tilde{l}-i}v_{h} \rb\Vert_{L^{2}(F)}^{2} \right),
\end{align}
where $l = 2\tilde{l}+1$. 

Now we assume that $1 \leq {l } < m$ is an even number, $l = 2\tilde{l}$ and 
\begin{align}
\label{induction_even}
\Vert \lb D^{l}v_{h} \rb_{J} \Vert_{L^{2}(F)}^{2} 
\leq C \left( \sum_{i=0}^{\tilde{l}-1} h^{-4i}\Vert \lb \Delta^{\tilde{l} - i}v_{h}  \rb \Vert_{L^{2}(F)}^{2} 
+ \sum_{i=0}^{\tilde{l} - 1} h^{-(4i+2)}\Vert \lb \nabla \Delta^{\tilde{l}-i-1} v_{h} \rb \Vert_{L^{2}(F)}^{2} \right). 
\end{align}
Then by similar argument in last paragraph, we have that (\ref{induction_even}) implies 
\begin{align}
\label{stab_m_l_plus_1_odd}
\Vert \lb D^{l+1}v_{h} \rb_{J} \Vert_{L^{2}(F)}^{2}
\leq C \left(\sum_{i=0}^{\tilde{l}} h^{-4i}\Vert \lb \nabla \Delta^{\tilde{l} - i} v_{h}\rb \Vert_{L^{2}(F)}^{2}
+ \sum_{i=0}^{\tilde{l}-1} h^{-(4i+2)}\Vert \lb \Delta^{\tilde{l} - i} v_{h} \rb \Vert_{L^{2}(F)}^{2} \right),
\end{align}
where $l = 2 \tilde{l}$. 

According to (\ref{stab_m_1}, \ref{stab_m_2}, \ref{induction_odd}, \ref{stab_m_l_plus_1_even}, \ref{induction_even}, 
\ref{stab_m_l_plus_1_odd}) and the fact that $F \in \calF_{h}$ is chosen arbitrarily, we can conclude that 
the proof is complete.
\end{proof}

According to \cite[($3.1c$) in Theorem~$3.1$]{ChenPaniQiu}, there is a constant $C>0$ such that 
\begin{align}
\label{discrete_H2_stab}
& \Vert \nabla \tilde{v}_{h}\Vert_{L^{2}(\calT_{h})}^{2} 
+ \Vert D^{2} \tilde{v}_{h}\Vert_{L^{2}(\calT_{h})}^{2} \\
\nonumber 
\leq & C \left( \Vert \Delta \tilde{v}_{h}\Vert_{L^{2}(\calT_{h})}^{2} 
+ h^{-1} \Vert \lb \nabla \tilde{v}_{h} \rb \Vert_{L^{2}(\calF_{h})}^{2} 
+ h^{-3} \Vert \lb \tilde{v}_{h} \rb \Vert_{L^{2}(\calF_{h})}^{2} \right), 
\quad \forall \tilde{v}_{h} \in P_{\tilde{r}}(\calT_{h}), 
\end{align}
where $\tilde{r} \geq 2$ is a positive integer.

\begin{lemma}
\label{lemma_D_m_interior}
We define $2\tilde{m} + 1 = m$ if $m$ is an odd number, while $2 \tilde{m} = m$ if $m$ is an even number. 
Then there is a positive constant $C$ such that 
\begin{align}
\label{D_m_interior_bound}
\Vert D^{m}v_{h}\Vert_{L^{2}(\calT_{h})}^{2} \leq
\begin{cases}
& C \left( \Vert \Delta^{\tilde{m}} v_{h} \Vert_{L^{2}(\calT_{h})}^{2} 
 + \sum_{j=1}^{m-1} h^{-(2m - 2j - 1)}\Vert \lb D^{j}v_{h} \rb_{J} 
\Vert_{L^{2}(\calF_{h})}^{2}\right), \text{ if } m = 2\tilde{m}; \\
& \\
& C \left( \Vert \nabla \Delta^{\tilde{m}} v_{h} \Vert_{L^{2}(\calT_{h})}^{2} 
 + \sum_{j=1}^{m-1} h^{-(2m - 2j - 1)}\Vert \lb D^{j}v_{h} \rb_{J} 
\Vert_{L^{2}(\calF_{h})}^{2}\right), \text{ if } m = 2\tilde{m}+1,
\end{cases}
\end{align}
for any $v_{h} \in V_{h}$. 
\end{lemma}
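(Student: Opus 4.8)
The plan is to prove, by induction on the order of differentiation in steps of two, a slightly stronger statement valid for \emph{all} piecewise polynomials rather than only for $v_{h} \in V_{h}$. Concretely, for every $w \in P_{r}(\calT_{h})$ and every integer $p$ with $1 \le p \le m$ I would establish
\begin{align*}
\Vert D^{p} w\Vert_{L^{2}(\calT_{h})}^{2} \le C\Big( \Vert \mathcal{D}^{p} w\Vert_{L^{2}(\calT_{h})}^{2} + \Sigma_{j=0}^{p-1} h^{-(2p - 2j -1)} \Vert \lb D^{j} w \rb_{J}\Vert_{L^{2}(\calF_{h})}^{2}\Big),
\end{align*}
where $\mathcal{D}^{p} = \Delta^{p/2}$ when $p$ is even and $\mathcal{D}^{p} = \nabla\Delta^{(p-1)/2}$ when $p$ is odd. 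The extra $j=0$ jump term (absent from \eqref{D_m_interior_bound}) is indispensable: the induction recurses on $\Delta w$, which is discontinuous across $\calF_{h}$ even when $w$ is continuous, so the intermediate estimate must tolerate nonzero traces. The auxiliary base case $p=1$ is the trivial identity $\Vert Dw\Vert^{2} = \Vert\nabla w\Vert^{2} = \Vert\mathcal{D}^{1} w\Vert^{2}$, and the base case $p=2$ is exactly the discrete $H^{2}$-stability estimate \eqref{discrete_H2_stab}, whose $j=0,1$ interface terms are precisely $\Vert\lb w\rb_{J}\Vert$ and a part of $\Vert\lb Dw\rb_{J}\Vert$. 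Since every quantity involved lies in $P_{r}(\calT_{h})$ with $r \ge m \ge 2$, the estimate \eqref{discrete_H2_stab} is applicable throughout with $\tilde{r} = r$.

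For the inductive step from $p-2$ to $p$, I would first write $\Vert D^{p} w\Vert^{2} = \Sigma_{|\alpha|=p-2}\Vert D^{2}(\partial^{\alpha} w)\Vert^{2}$ using the ordered-tuple convention of Definition~\ref{def_discrete_H_m}, apply \eqref{discrete_H2_stab} to each piecewise polynomial $\partial^{\alpha} w$, and sum. This produces a volume term $\Sigma_{|\alpha|=p-2}\Vert \Delta\partial^{\alpha} w\Vert^{2} = \Vert D^{p-2}(\Delta w)\Vert^{2}$ together with two interface terms $h^{-1}\Sigma_{|\alpha|=p-2}\Vert\lb\nabla\partial^{\alpha} w\rb\Vert^{2}$ and $h^{-3}\Sigma_{|\alpha|=p-2}\Vert\lb\partial^{\alpha} w\rb\Vert^{2}$. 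Because the normal jump of a gradient is dominated (Cauchy--Schwarz) by the full componentwise jump of its first derivatives, these are bounded by $h^{-1}\Vert\lb D^{p-1}w\rb_{J}\Vert^{2}$ and $h^{-3}\Vert\lb D^{p-2}w\rb_{J}\Vert^{2}$, i.e. exactly the $j=p-1$ and $j=p-2$ terms with the correct powers of $h$. I would then invoke the induction hypothesis for the piecewise polynomial $\Delta w$ at order $p-2$; since $\mathcal{D}^{p-2}(\Delta w) = \mathcal{D}^{p} w$, the volume term collapses to the desired $\Vert \mathcal{D}^{p} w\Vert^{2}$, while each jump $\Vert\lb D^{j}(\Delta w)\rb_{J}\Vert$ is controlled by $\Vert\lb D^{j+2} w\rb_{J}\Vert$ and its weight $h^{-(2(p-2)-2j-1)} = h^{-(2p - 2(j+2) -1)}$ matches the weight of the index $j'=j+2 \in \{2,\dots,p-1\}$. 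Hence all contributions land in the claimed sum, and the omitted low-order indices may be reinstated with their nonnegative weights.

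Finally, I would specialize to $w = v_{h} \in V_{h}$ and $p = m$: since $v_{h} \in H_{0}^{1}(\Omega)$ is continuous across interior faces and has vanishing trace on $\partial\Omega$, the $j=0$ term $h^{-(2m-1)}\Vert\lb v_{h}\rb_{J}\Vert^{2}$ vanishes, leaving precisely \eqref{D_m_interior_bound} with $\mathcal{D}^{m} = \Delta^{\tilde{m}}$ or $\nabla\Delta^{\tilde{m}}$ according to the parity of $m$. I expect the main obstacle to be bookkeeping rather than analysis: the delicate point is verifying that the powers of $h$ telescope correctly under the substitution $j \mapsto j+2$ as one passes from order $p-2$ (applied to $\Delta w$) to order $p$, and organizing the reduction so that the discontinuity of $\Delta w$ is absorbed by the auxiliary $j=0$ term rather than breaking the recursion.
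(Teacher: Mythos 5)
Your proposal is correct and follows essentially the same route as the paper: both rest on the discrete $H^{2}$-stability estimate \eqref{discrete_H2_stab} applied componentwise to $D^{p-2}w$, yielding the recursion $\Vert D^{p}w\Vert^{2}\lesssim \Vert D^{p-2}\Delta w\Vert^{2}+h^{-3}\Vert\lb D^{p-2}w\rb_{J}\Vert^{2}+h^{-1}\Vert\lb D^{p-1}w\rb_{J}\Vert^{2}$, which is iterated on $\Delta w$. Your version merely makes explicit what the paper leaves implicit, namely that the recursion must be formulated for general piecewise polynomials with the $j=0$ jump term included, since $\Delta w$ is discontinuous; this is a sound (and slightly more careful) writeup of the same argument.
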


\begin{proof}
It is easy to see that (\ref{D_m_interior_bound}) holds when $m=1$. 
By (\ref{discrete_H2_stab}), (\ref{D_m_interior_bound}) holds when $m=2$.

It is easy to see 
\begin{align*}
\Vert D^{3}v_{h}\Vert_{L^{2}(\calT_{h})}^{2} 
= \sum_{k=1}^{d} \Vert D^{2}(\partial_{x_{k}} v_{h})\Vert_{L^{2}(\calT_{h})}^{2}.
\end{align*}
Applying (\ref{discrete_H2_stab}) to each $\partial_{x_{k}} v_{h}$, we have 
\begin{align*}
& \Vert D^{3}v_{h}\Vert_{L^{2}(\calT_{h})}^{2} \\
\leq & C \sum_{k=1}^{d}\left( \Vert \Delta (\partial_{x_{k}} v_{h}) \Vert_{L^{2}(\calT_{h})}^{2} 
+ h^{-3}\Vert \lb  \partial_{x_{k}} v_{h} \rb_{J} \Vert_{L^{2}(\calF_{h})}^{2}
+ h^{-1}\Vert \lb D (\partial_{x_{k}} v_{h}) \rb_{J} \Vert_{L^{2}(\calF_{h})}^{2}\right) \\
= & C \left( \Vert \nabla \Delta v_{h} \Vert_{L^{2}(\calT_{h})}^{2} 
  + h^{-3}\Vert \lb D v_{h} \rb_{J} \Vert_{L^{2}(\calF_{h})}^{2} 
+ h^{-1}\Vert \lb D^{2} v_{h} \rb_{J} \Vert_{L^{2}(\calF_{h})}^{2} \right).
\end{align*}
Thus (\ref{D_m_interior_bound}) holds when $m=3$. 

For any $2 < l \leq m$, we have 
\begin{align*}
\Vert D^{l} v_{h}\Vert_{L^{2}(\calT_{h})}^{2} 
= \Vert D^{2} (D^{l-2} v_{h}) \Vert_{\calT_{h}}^{2}.
\end{align*}
Applying (\ref{discrete_H2_stab}) to each component of $D^{l-2} v_{h}$, we have 
\begin{align}
\label{D_m_interior_bound_recursive}
& \Vert D^{l} v_{h}\Vert_{L^{2}(\calT_{h})}^{2} \\
\nonumber
\leq & C \left( \Vert D^{l-2} \Delta v_{h} \Vert_{L^{2}(\calT_{h})}^{2}  
+ h^{-3}\Vert \lb D^{l-2} v_{h} \rb_{J} \Vert_{L^{2}(\calF_{h})}^{2} 
+ h^{-1}\Vert \lb D^{l-1} v_{h} \rb_{J} \Vert_{L^{2}(\calF_{h})}^{2} \right).
\end{align}

According to (\ref{D_m_interior_bound_recursive}) and the fact (\ref{D_m_interior_bound}) holds for $m=1,2,3$, we can conclude that the proof is complete.
\end{proof}

According to Lemma~\ref{lemma_stab}, Lemma~\ref{lemma_D_m_interior} 
and the discrete Poincar\'e inequality, we immediately have the following 
Theorem~\ref{thm_stab}.

\begin{theorem}
\label{thm_stab}
For any integers $r \geq m\geq 1$, there is a constant $C>0$ such that 
\begin{align*}
\Vert v_{h}\Vert_{m, h}^{2} \leq 
\begin{cases}
&C \left( \Vert \Delta^{\tilde{m}} v_{h}\Vert_{L^{2}(\calT_{h})}^{2} 
+ S_{h}(v_{h}, v_{h})\right),\quad \text{ if } m = 2 \tilde{m}; \\
& \\
&C \left( \Vert \nabla \Delta^{\tilde{m}} v_{h}\Vert_{L^{2}(\calT_{h})}^{2} 
+ S_{h}(v_{h}, v_{h})\right),\quad \text{ if } m = 2 \tilde{m} + 1,
\end{cases}
\end{align*}
for any $v_{h} \in V_{h}$. $\Vert v_{h}\Vert_{m, h}$ is introduced in Definition~\ref{def_discrete_H_m}. 
We point out that the right hand side of the above inequality is the natural energy semi-norm 
associated with the method (\ref{IP_method}). 
\end{theorem}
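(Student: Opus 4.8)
The plan is to assemble Theorem~\ref{thm_stab} directly from the two preceding lemmas together with a discrete Poincar\'e inequality, treating the even and odd cases of $m$ in parallel. Recall the definition
\begin{align*}
\Vert v_{h} \Vert_{m,h}^{2} = \Sigma_{i=0}^{m} \Vert D^{i} v_{h}\Vert_{L^{2}(\calT_{h})}^{2}
+ \Sigma_{j = 1}^{m-1} h^{-(2m - 2j - 1)} \Vert \lb D^{j}v_{h} \rb_{J}\Vert_{L^{2}(\calF_{h})}^{2}.
\end{align*}
The interface sum is exactly the left-hand side of Lemma~\ref{lemma_stab}, so it is immediately bounded by $C\,S_{h}(v_{h},v_{h})$. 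What remains is to control the volume sum $\Sigma_{i=0}^{m}\Vert D^{i}v_{h}\Vert_{L^{2}(\calT_{h})}^{2}$ by the claimed right-hand side.

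First I would handle the top-order term $\Vert D^{m}v_{h}\Vert_{L^{2}(\calT_{h})}^{2}$: Lemma~\ref{lemma_D_m_interior} bounds it by $\Vert\Delta^{\tilde m}v_{h}\Vert_{L^{2}(\calT_{h})}^{2}$ (resp. $\Vert\nabla\Delta^{\tilde m}v_{h}\Vert_{L^{2}(\calT_{h})}^{2}$) plus the same interface sum, and the interface sum is again absorbed into $S_{h}(v_{h},v_{h})$ via Lemma~\ref{lemma_stab}. Next I would dispose of the intermediate orders $\Vert D^{i}v_{h}\Vert_{L^{2}(\calT_{h})}^{2}$ for $1\le i\le m-1$. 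The natural tool is a discrete interpolation/Poincar\'e estimate on $V_{h}=H_{0}^{1}(\Omega)\cap P_{r}(\calT_{h})$: since $v_{h}$ vanishes on $\partial\Omega$ and lies in $H^{1}_0(\Omega)$, each lower-order broken Sobolev seminorm is controlled by the top-order one together with the interface jump penalties already accounted for. Concretely I expect to invoke the same broken-norm machinery underlying Lemma~\ref{lemma_D_m_interior} (repeated application of the $H^{2}$-type stability estimate (\ref{discrete_H2_stab}) and discrete inverse inequalities), together with a discrete Poincar\'e inequality to absorb the $i=0$ term $\Vert v_{h}\Vert_{L^{2}(\calT_{h})}^{2}$. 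Summing these bounds over $i$ and collapsing all interface contributions through Lemma~\ref{lemma_stab} yields the stated inequality.

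The cases $m=1$ and $m=2$ deserve a separate remark: when $m=1$ we have $S_{h}\equiv 0$, and the estimate reduces to the classical Poincar\'e inequality $\Vert v_{h}\Vert_{L^{2}(\Omega)}^{2}+\Vert\nabla v_{h}\Vert_{L^{2}(\Omega)}^{2}\le C\Vert\nabla v_{h}\Vert_{L^{2}(\Omega)}^{2}$; since $v_{h}\in H^{1}_0(\Omega)$ there are no interface jump terms to control. For $m\ge 2$ the argument above applies verbatim. The main obstacle I anticipate is making the discrete Poincar\'e step fully rigorous across the broken space: one must be careful that the lower-order seminorms are dominated using \emph{only} the top-order volume term and the interface jumps appearing in $\Vert\cdot\Vert_{m,h}$, so that no extraneous jump norms (of orders not penalized by $S_{h}$) are generated. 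Matching the exact powers of $h$ in the jump penalties to those in the definition of $\Vert\cdot\Vert_{m,h}$ and in $S_{h}$ is the place where care is required, but this bookkeeping is exactly what Lemma~\ref{lemma_stab} and Lemma~\ref{lemma_D_m_interior} were engineered to supply, so the theorem follows by combining them.
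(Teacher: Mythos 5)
Your proposal is correct and follows essentially the same route as the paper, which proves the theorem by combining Lemma~\ref{lemma_stab} (to absorb the interface jump sum into $S_{h}(v_{h},v_{h})$), Lemma~\ref{lemma_D_m_interior} (to bound the top-order volume term), and the discrete Poincar\'e inequality (to control the lower-order volume terms). The paper states this combination as immediate without spelling out the intermediate-order bookkeeping you describe, but your elaboration matches its intended argument.
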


\subsection{Energy estimate of $C^{0}$ interior penalty method}
We provide Theorem~\ref{thm_wellposed}, which shows energy estimate of $C^{0}$ interior penalty method (\ref{IP_method}) 
with respect to the discrete $H^{m}$-norm (see Definition~\ref{def_discrete_H_m}). 
Before we prove Theorem~\ref{thm_wellposed}, we introduce Lemma~\ref{lemma_wellposed}. 

\begin{lemma}
\label{lemma_wellposed}
For any integers $r \geq m \geq 2$ and any spatial dimension $d \geq 1$, there is a positive number 
$\tau_{0} \geq 1$ such that for any $v_{h} \in V_{h}$,
\begin{align}
\label{wellposed_ineq}
& 4  |C_{h}(v_{h}, v_{h})| \\
\nonumber
\leq & 
\begin{cases}
&\Vert \Delta^{\tilde{m}} v_{h}\Vert_{L^{2}(\calT_{h})}^{2} + \tau_{0} S_{h}(v_{h}, v_{h}), 
\quad \text{ if } m = 2 \tilde{m} \quad (m \text{ is an even number}); \\
&\\
&\Vert \nabla \Delta^{\tilde{m}} v_{h}\Vert_{L^{2}(\calT_{h})}^{2} + \tau_{0} S_{h}(v_{h}, v_{h}), 
\quad \text{ if } m = 2 \tilde{m}+1 \quad (m \text{ is an odd number}).
\end{cases}
\end{align}
\end{lemma}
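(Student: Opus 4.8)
The plan is to estimate $C_{h}(v_{h},v_{h})$ face by face, applying Cauchy--Schwarz to each duality pairing in Definition~\ref{def_couple} so that every summand splits into an \emph{average} factor (of the form $\Vert\Lb\Delta^{\tilde m+i}v_{h}\Rb\Vert_{L^{2}(F)}$ or $\Vert\Lb\nabla\Delta^{\tilde m+i}v_{h}\Rb\Vert_{L^{2}(F)}$) and a \emph{jump} factor (of the form $\Vert\lb\nabla\Delta^{\tilde m-i-1}v_{h}\rb\Vert_{L^{2}(F)}$, $\Vert\lb\Delta^{\tilde m-i-1}v_{h}\rb\Vert_{L^{2}(F)}$, or $\Vert\lb\Delta^{\tilde m-i}v_{h}\rb\Vert_{L^{2}(F)}$). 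The decisive structural observation is a degree count: in every pairing the two factors carry derivative orders summing to $2m-1$, and the weights $h^{-(4i+1)}$, $h^{-(4i+3)}$ prescribed in Definition~\ref{def_stab_new} are exactly those needed for each jump factor to reproduce the matching summand of $S_{h}(v_{h},v_{h})$. Hence the only remaining task is to absorb each average factor into the bulk term $\Vert\Delta^{\tilde m}v_{h}\Vert_{L^{2}(\calT_{h})}^{2}$ (even case) or $\Vert\nabla\Delta^{\tilde m}v_{h}\Vert_{L^{2}(\calT_{h})}^{2}$ (odd case).

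To control the average factor I would invoke two standard polynomial estimates on each element $K$: the discrete trace inequality $\Vert w\Vert_{L^{2}(\partial K)}\le C h^{-1/2}\Vert w\Vert_{L^{2}(K)}$, and the inverse inequality $\Vert\Delta^{\tilde m+i}v_{h}\Vert_{L^{2}(K)}\le C h^{-2i}\Vert\Delta^{\tilde m}v_{h}\Vert_{L^{2}(K)}$ (with $\nabla\Delta^{\tilde m}$ in place of $\Delta^{\tilde m}$ for the odd case). Combining them, on the element patch $\omega_{F}$ sharing $F$ the average factors obey
\[
\Vert\Lb\Delta^{\tilde m+i}v_{h}\Rb\Vert_{L^{2}(F)}^{2}\le C\,h^{-(4i+1)}\Vert\Delta^{\tilde m}v_{h}\Vert_{L^{2}(\omega_{F})}^{2},\qquad \Vert\Lb\nabla\Delta^{\tilde m+i}v_{h}\Rb\Vert_{L^{2}(F)}^{2}\le C\,h^{-(4i+3)}\Vert\Delta^{\tilde m}v_{h}\Vert_{L^{2}(\omega_{F})}^{2},
\]
with exponents matching precisely the reciprocals of the $S_{h}$-weights attached to the partner jump (and analogously with $\nabla\Delta^{\tilde m}$ in the odd case). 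On boundary faces the average and jump are one-sided, which only simplifies these bounds.

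With both factors in hand I would apply the weighted Young inequality $ab\le \tfrac{\delta}{2}a^{2}+\tfrac{1}{2\delta}b^{2}$ to each pairing, taking $a$ to be the jump factor carried with its $S_{h}$-weight and $b$ the average factor. The $\tfrac{\delta}{2}a^{2}$ half then reproduces $\delta$ times the matching $S_{h}$ summand, while the $\tfrac{1}{2\delta}b^{2}$ half contributes $\tfrac{C}{\delta}\Vert\Delta^{\tilde m}v_{h}\Vert_{L^{2}(\omega_{F})}^{2}$ (resp.\ $\nabla\Delta^{\tilde m}$). Summing over all $F\in\calF_{h}$ and using the finite overlap of the patches $\omega_{F}$ collapses the bulk contributions into $\tfrac{C}{\delta}\Vert\Delta^{\tilde m}v_{h}\Vert_{L^{2}(\calT_{h})}^{2}$ and the stabilization contributions into $C\delta\,S_{h}(v_{h},v_{h})$. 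Choosing $\delta$ large enough that $4\cdot\tfrac{C}{\delta}\le 1$ and setting $\tau_{0}:=\max\{1,\,4C\delta\}$ yields \eqref{wellposed_ineq}.

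The main obstacle is the bookkeeping of the $h$-powers: one must verify, case by case in $i$ and in the parity of $m$, that the inverse-plus-trace estimate deposits on each average factor exactly the weight reciprocal to the $S_{h}$-weight of its partner jump, so that the two halves of Young's inequality land cleanly on the bulk semi-norm and on $S_{h}$ with no residual power of $h$. Once this degree-counting is confirmed, the summation over faces via finite overlap and the one-sided treatment of boundary faces are routine, and the freedom to enlarge $\delta$ supplies both the absorption of the bulk term and the requirement $\tau_{0}\ge 1$.
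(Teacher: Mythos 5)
Your proposal is correct and follows essentially the same route as the paper's proof: Cauchy--Schwarz on each pairing in $C_{h}$, a discrete trace inequality combined with an inverse inequality to convert each average factor into $h^{-(2i+\frac12)}\Vert\Delta^{\tilde m}v_{h}\Vert_{L^{2}(\calT_{h})}$ (or the corresponding power times $\Vert\nabla\Delta^{\tilde m}v_{h}\Vert_{L^{2}(\calT_{h})}$ in the odd case), and then Young's inequality to absorb the bulk term with coefficient $\frac14$ and match the remaining jump terms to the weights in $S_{h}$. Your degree counting of the $h$-exponents is exactly the bookkeeping the paper carries out, so the argument is sound.
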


\begin{proof}
We prove (\ref{wellposed_ineq}) for $m = 2\tilde{m}$ ($m$ is an even number) in the following. 
It is similar to prove (\ref{wellposed_ineq}) for $m$ which is an odd number.

According to Definition~\ref{def_couple}, discrete trace inequality 
and inverse inequality,  
\begin{align*}
& |C_{h}(v_{h}, v_{h}) |\\ 
= & \left|- \sum_{i=0}^{\tilde{m} -1} \langle \Lb \Delta^{\tilde{m} + i} v_{h} \Rb, 
\lb \nabla \Delta^{\tilde{m} - i -1}v_{h} \rb \rangle_{\calF_{h}} 
+ \sum_{i=0}^{\tilde{m} - 2} \langle \Lb \nabla \Delta^{\tilde{m} + i} v_{h} \Rb, 
 \lb \Delta^{\tilde{m} - i-1} v_{h}\rb  \rangle_{\calF_{h}} \right| \\
\leq & \sum_{i=0}^{\tilde{m} - 1} \Vert \Lb \Delta^{\tilde{m} + i} v_{h} \Rb \Vert_{L^{2}(\calF_{h})} 
\Vert \lb \nabla \Delta^{\tilde{m} - i -1}v_{h} \rb \Vert_{L^{2}(\calF_{h})} \\
& \qquad + \sum_{i=0}^{\tilde{m} - 2} \Vert \Lb \nabla \Delta^{\tilde{m} + i} v_{h} \Rb \Vert_{L^{2}(\calF_{h})} 
\Vert \lb \Delta^{\tilde{m} - i-1} v_{h}\rb \Vert_{L^{2}(\calF_{h})} \\
\leq & C \sum_{i=0}^{\tilde{m} - 1} h^{-\frac{1}{2}}\Vert \Delta^{\tilde{m} + i} v_{h}\Vert_{L^{2}(\calT_{h})} 
\Vert \lb \nabla \Delta^{\tilde{m} - i -1}v_{h} \rb \Vert_{L^{2}(\calF_{h})} \\
& \qquad + C \sum_{i=0}^{\tilde{m} - 2} h^{-\frac{1}{2}} \Vert \nabla \Delta^{\tilde{m} + i} v_{h} \Vert_{L^{2}(\calT_{h})} 
\Vert \lb \Delta^{\tilde{m} - i-1} v_{h}\rb \Vert_{L^{2}(\calF_{h})} \\
\leq & C \sum_{i=0}^{\tilde{m} - 1} h^{-(2i + \frac{1}{2})}\Vert \Delta^{\tilde{m}} v_{h}\Vert_{L^{2}(\calT_{h})} 
\Vert \lb \nabla \Delta^{\tilde{m} - i -1}v_{h} \rb \Vert_{L^{2}(\calF_{h})} \\
& \qquad + C \sum_{i=0}^{\tilde{m} - 2} h^{-(2i+\frac{3}{2})} \Vert \Delta^{\tilde{m}} v_{h} \Vert_{L^{2}(\calT_{h})} 
\Vert \lb \Delta^{\tilde{m} - i-1} v_{h}\rb \Vert_{L^{2}(\calF_{h})} \\
\leq & \dfrac{1}{4}\Vert \Delta^{\tilde{m}} v_{h}\Vert_{L^{2}(\calT_{h})}^{2}  
+ C \left( \sum_{i=0}^{\tilde{m} - 1} h^{-(4i+ 1)} \Vert \lb \nabla \Delta^{\tilde{m} - i -1}v_{h} 
\rb \Vert_{L^{2}(\calF_{h})}^{2} + \sum_{i=0}^{\tilde{m} - 2} h^{-(4i+3)} 
\Vert \lb \Delta^{\tilde{m} - i-1} v_{h}\rb \Vert_{L^{2}(\calF_{h})}^{2} \right). 
\end{align*} 
By Definition~\ref{def_stab_new}, it is easy to see that (\ref{wellposed_ineq}) holds. Therefore the proof is complete. 
\end{proof}

\begin{theorem}
\label{thm_wellposed}
When $m=1$, the method (\ref{IP_method}) is well-posed such that 
\begin{align}
\label{enery_bound_m_1}
\Vert u_{h} \Vert_{H^{1}(\Omega)} \leq C \Vert f\Vert_{H^{-1}(\Omega)}.
\end{align} 
For any $m \geq 2$, there is a positive number $\tau_{0}\geq 1$ which is the same as Lemma~\ref{lemma_wellposed}, 
such that if $\tau \geq \tau_{0}$, then the method (\ref{IP_method}) is well-posed such that 
\begin{align}
\label{energy_bound_m_geq_2}
\Vert u_{h}\Vert_{m, h} \leq C \Vert f \Vert_{H^{-1}(\Omega)}.
\end{align}
Here $u_{h} \in V_{h}$ is the numerical solution of the method (\ref{IP_method}). 
\end{theorem}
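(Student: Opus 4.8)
The plan is to recast (\ref{IP_method}) for $m\ge 2$ as the variational problem of finding $u_h\in V_h$ with $a_h(u_h,v_h)=(f,v_h)_\Omega$ for all $v_h\in V_h$, where
\[
a_h(w_h,v_h)=(\Delta^{\tilde m}w_h,\Delta^{\tilde m}v_h)_{\calT_h}+C_h(w_h,v_h)+C_h(v_h,w_h)+\tau S_h(w_h,v_h)
\]
(with $\nabla\Delta^{\tilde m}$ in place of $\Delta^{\tilde m}$ when $m$ is odd). Since $S_h$ is symmetric by Definition~\ref{def_stab_new} and $C_h(w_h,v_h)+C_h(v_h,w_h)$ is manifestly symmetric, $a_h$ is a symmetric bilinear form on the finite-dimensional space $V_h$. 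Consequently, once I show $a_h$ is coercive with respect to $\Vert\cdot\Vert_{m,h}$ and that $v_h\mapsto(f,v_h)_\Omega$ is a bounded functional in this norm, well-posedness follows immediately: coercivity makes $a_h$ positive definite, so the associated Gram system is uniquely solvable, and testing the equation with $v_h=u_h$ yields the energy bound (\ref{energy_bound_m_geq_2}). The case $m=1$ is handled separately and is classical: (\ref{method_m_1}) is the standard Galerkin method for the Poisson problem, and coercivity of $(\nabla\cdot,\nabla\cdot)_\Omega$ on $H_0^1(\Omega)$ via the Poincar\'e inequality gives (\ref{enery_bound_m_1}).

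The heart of the argument is the coercivity estimate for $m\ge 2$. Taking $w_h=v_h$ gives $a_h(v_h,v_h)=\Vert\Delta^{\tilde m}v_h\Vert_{L^2(\calT_h)}^2+2C_h(v_h,v_h)+\tau S_h(v_h,v_h)$, and the obstacle is precisely that the coupling term $2C_h(v_h,v_h)$ is indefinite. Here Lemma~\ref{lemma_wellposed} is exactly the tool needed. I would first observe that the chain of Cauchy--Schwarz and Young inequalities in its proof in fact bounds $4\vert C_h(v_h,v_h)\vert$ (not merely $4C_h(v_h,v_h)$) by the right-hand side of (\ref{wellposed_ineq}), since every term is estimated in absolute value. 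Hence $-2C_h(v_h,v_h)\le \tfrac12\Vert\Delta^{\tilde m}v_h\Vert_{L^2(\calT_h)}^2+\tfrac{\tau_0}{2}S_h(v_h,v_h)$, and absorbing this into the two positive contributions gives, provided $\tau\ge\tau_0$,
\[
a_h(v_h,v_h)\ge \tfrac12\Vert\Delta^{\tilde m}v_h\Vert_{L^2(\calT_h)}^2+\big(\tau-\tfrac{\tau_0}{2}\big)S_h(v_h,v_h)\ge \tfrac12\big(\Vert\Delta^{\tilde m}v_h\Vert_{L^2(\calT_h)}^2+S_h(v_h,v_h)\big),
\]
where the last step uses $\tau-\tfrac{\tau_0}{2}\ge\tfrac{\tau_0}{2}\ge\tfrac12$ because $\tau_0\ge1$. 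Theorem~\ref{thm_stab} then converts the right-hand side into $\tfrac{1}{2C}\Vert v_h\Vert_{m,h}^2$, giving genuine coercivity in the discrete $H^m$-norm. The odd-$m$ case is identical, with $\nabla\Delta^{\tilde m}$ replacing $\Delta^{\tilde m}$ throughout.

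Finally, for boundedness of the load functional I would use that $\Vert\cdot\Vert_{m,h}$ dominates the $H^1(\Omega)$-norm: by Definition~\ref{def_discrete_H_m} the volumetric part already contains $\Vert v_h\Vert_{L^2(\Omega)}^2+\Vert D v_h\Vert_{L^2(\calT_h)}^2=\Vert v_h\Vert_{H^1(\Omega)}^2$, the broken and global gradients agreeing because $v_h\in H_0^1(\Omega)$. Since $f\in H^{-1}(\Omega)$, this yields $\vert(f,v_h)_\Omega\vert\le\Vert f\Vert_{H^{-1}(\Omega)}\Vert v_h\Vert_{H^1(\Omega)}\le\Vert f\Vert_{H^{-1}(\Omega)}\Vert v_h\Vert_{m,h}$. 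Combining with coercivity and testing the scheme with $v_h=u_h$ gives $\tfrac{1}{2C}\Vert u_h\Vert_{m,h}^2\le a_h(u_h,u_h)=(f,u_h)_\Omega\le\Vert f\Vert_{H^{-1}(\Omega)}\Vert u_h\Vert_{m,h}$, which is (\ref{energy_bound_m_geq_2}). The only genuine difficulty is the indefiniteness of $C_h$; everything else is bookkeeping of the absorption and an appeal to the already-established Theorem~\ref{thm_stab}. I would stress that it is essential that the norm controls only $H^1$, matching $f\in H^{-1}(\Omega)$, which is why no higher regularity of $f$ is required here.
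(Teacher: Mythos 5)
Your proposal is correct and follows essentially the same route as the paper: test with $v_h=u_h$, absorb the indefinite coupling term via Lemma~\ref{lemma_wellposed} (whose proof indeed bounds $4\vert C_h(v_h,v_h)\vert$, as you note), pass to the discrete $H^m$-norm via Theorem~\ref{thm_stab}, and bound the load functional using only the $H^1$ part of $\Vert\cdot\Vert_{m,h}$. Your explicit remarks on the symmetry/positive-definiteness argument for solvability and on why $\vert C_h\vert$ (not just $C_h$) is controlled are details the paper leaves implicit, but the substance is identical.
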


\begin{proof}
By (\ref{method_m_1}), the method is the standard finite element method for Poisson equation when $m=1$. Therefore, 
the method (\ref{IP_method}) is well-posed and (\ref{enery_bound_m_1}) holds, when $m=1$. 

Now we consider $m \geq 2$. { By the definition of the bilinear form $a_h(\cdot,\cdot)$, Theorem \ref{thm_stab} and Lemma \ref{lemma_wellposed}, the coercivity and the continuity of $a_h(\cdot,\cdot)$ are obtained which imply the well-posedness of the method (\ref{IP_method}).} We assume $m = 2\tilde{m}$ to be an even number.
By taking $v_{h} = u_{h}$ in the method (\ref{IP_method}), we have 
\begin{align*}
\Vert \Delta^{\tilde{m}} u_{h} \Vert_{L^{2}(\calT_{h})}^{2} + 2 C_{h}(u_{h}, u_{h}) 
+ \tau S_{h}(u_{h}, u_{h}) = (f, u_{h})_{\Omega}.
\end{align*}
We choose $\tau_{0}$ the same as Lemma~\ref{lemma_wellposed}. Then Lemma~\ref{lemma_wellposed} implies 
\begin{align*}
\dfrac{1}{2}\Vert \Delta^{\tilde{m}} u_{h} \Vert_{L^{2}(\calT_{h})}^{2} 
+ \dfrac{\tau}{2} S_{h}(u_{h}, u_{h}) \leq (f, u_{h})_{\Omega} 
\leq \Vert f\Vert_{H^{-1}(\Omega)} \Vert u_{h}\Vert_{H^{1}(\Omega)} 
\leq \Vert f\Vert_{H^{-1}(\Omega)} \Vert u_{h}\Vert_{m,h},
\end{align*}
if $\tau \geq \tau_{0}$. Then by Theorem~\ref{thm_stab} and the above inequality, 
{we obtain (\ref{energy_bound_m_geq_2}) when $m$ is an even number.} 

It is similar to show that (\ref{energy_bound_m_geq_2}) holds when $m$ is an odd number. Thus we can conclude that 
the proof is complete.
\end{proof}

\subsection{Error analysis of $C^{0}$ interior penalty method}

We provide Theorem~\ref{thm_conv}, which gives error analysis of $C^{0}$ interior penalty method (\ref{IP_method}) 
with respect to the discrete $H^{m}$-norm (see Definition~\ref{def_discrete_H_m}).

\begin{theorem}
\label{thm_conv}
We assume that the exact solution $u \in H_{0}^{m}(\Omega) \cap H^{s}(\Omega)$ where $s \geq 2m - 1$.
When $m=1$, we have
\begin{align}
\label{final_error_m_1}
\Vert u - u_{h}\Vert_{H^{1}(\Omega)} \leq C h^{min(r, s-1)} \Vert u\Vert_{H^{s}(\Omega)}.
\end{align}
{For $m\geq 2$, we assume that $\tau \geq \tau_{0} \geq 1$} where $\tau_{0}$ 
is the same as Theorem~\ref{thm_wellposed}. Then we have 
\begin{align}
\label{final_error}
\Vert u - u_{h}\Vert_{m, h} \leq C h^{\min (r + 1 - m, s - m)} \Vert u \Vert_{H^{s}(\Omega)}. 
\end{align}
Here $u_{h} \in V_{h}$ is the numerical solution of the method (\ref{IP_method}). 
\end{theorem}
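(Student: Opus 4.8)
The plan is to prove the error estimate via the standard Strang-type argument for nonconforming/interior-penalty methods, separating the total error into an approximation part and a discrete part controlled by consistency. First I would establish Galerkin orthogonality: since the exact solution $u \in H^{2m-1}(\Omega)$ satisfies the identity (\ref{u_eq1}), and since the coupling terms $C_h(v_h, u)$ and the stabilization terms $S_h(u, v_h)$ vanish when one argument is the exact solution (because $u \in H^m(\Omega)$ forces the jumps $\lb D^j u\rb_J = 0$ across interior faces for $1 \le j \le m-1$), the discrete bilinear form evaluated at $(u, v_h)$ reproduces $(f, v_h)_\Omega$. Subtracting the scheme (\ref{IP_method}) then yields that the consistency error is essentially zero, so the method is consistent and the error analysis reduces to estimating the approximation error. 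The case $m=1$ is simply Cea's lemma for the Poisson problem, giving (\ref{final_error_m_1}) directly; the substance is in $m \ge 2$.

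For $m \ge 2$, I would introduce the interpolant $I_h u \in V_h$ and split $u - u_h = (u - I_h u) + (I_h u - u_h) =: \eta + \xi$, where $\xi \in V_h$. The plan is to bound $\Vert \xi\Vert_{m,h}$ by coercivity plus consistency, then bound $\Vert\eta\Vert_{m,h}$ by interpolation estimates. For the discrete part, I would apply the bilinear form to $\xi$: by coercivity (the combination of Theorem~\ref{thm_stab}, which bounds $\Vert\xi\Vert_{m,h}^2$ by the natural energy semi-norm, together with Lemma~\ref{lemma_wellposed}, which absorbs the cross terms $2C_h(\xi,\xi)$ for $\tau \ge \tau_0$) one controls $\Vert\xi\Vert_{m,h}^2$ from below by the energy form $a_h(\xi,\xi)$. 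Using consistency, $a_h(\xi,\xi) = a_h(I_h u - u, \xi) = a_h(\eta,\xi)$, so it remains to bound $|a_h(\eta,\xi)|$ by $C\Vert\eta\Vert_{\ast}\Vert\xi\Vert_{m,h}$ in a suitable norm on $\eta$ that includes the element volume terms $\Vert\Delta^{\tilde m}\eta\Vert$ (or $\Vert\nabla\Delta^{\tilde m}\eta\Vert$), the stabilization jump seminorm $S_h(\eta,\eta)^{1/2}$, and the additional face terms appearing in $C_h(\eta,\xi)$ and $C_h(\xi,\eta)$ after Cauchy--Schwarz and discrete trace/inverse inequalities (as carried out in the proof of Lemma~\ref{lemma_wellposed}).

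The key estimates for the coupling terms require bounding the averages $\Lb \Delta^{\tilde m + i}\eta\Rb$ and $\Lb\nabla\Delta^{\tilde m + i}\eta\Rb$ on faces by a combination of interpolation errors; here I would use the continuous trace inequality $\Vert w\Vert_{L^2(\partial K)}^2 \le C(h^{-1}\Vert w\Vert_{L^2(K)}^2 + h\Vert \nabla w\Vert_{L^2(K)}^2)$ applied to high-order derivatives of $\eta$, combined with standard polynomial interpolation estimates $\Vert D^k(u - I_h u)\Vert_{L^2(K)} \le C h^{\min(r+1,s) - k}\Vert u\Vert_{H^s(K)}$. Matching powers of $h$ against the face-weight exponents in $C_h$ and $S_h$, each term should contribute the optimal rate $h^{\min(r+1-m, s-m)}$. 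Finally, I would bound $\Vert\eta\Vert_{m,h}$ itself (both volume and jump parts, the latter again via trace inequalities on $\eta$ since $u$ is globally smooth so only the interpolation jumps survive) by the same rate, and conclude by the triangle inequality $\Vert u - u_h\Vert_{m,h} \le \Vert\eta\Vert_{m,h} + \Vert\xi\Vert_{m,h}$.

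The main obstacle I anticipate is the careful bookkeeping of the $h$-powers across the many coupling and stabilization terms: one must verify that for every index $i$ in the sums defining $C_h$ and $S_h$, the weight exponents ($h^{-1/2}$ from trace inequalities, $h^{-2i}$ from inverse inequalities relating $\Delta^{\tilde m + i}$ to $\Delta^{\tilde m}$, and the interpolation gain $h^{\min(r+1,s)-k}$) combine to give a uniform optimal rate with no loss. A secondary subtlety is justifying consistency rigorously under the minimal regularity $u \in H^{2m-1}(\Omega)$, ensuring all the face integrals in (\ref{u_eq1}) are well-defined as traces and that the jump terms genuinely vanish; this relies on $2m-1 \ge m$, i.e. $m \ge 1$, so the traces of $D^{m-1}u$ and below are single-valued across interior faces.
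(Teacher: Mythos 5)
Your proposal is correct and follows essentially the same route as the paper: consistency ($C_h(v_h,u)=S_h(u,v_h)=0$ from $u\in H_0^m(\Omega)$), a splitting $u-u_h=(u-\Pi_h u)+(\Pi_h u-u_h)$, coercivity from Lemma~\ref{lemma_wellposed} combined with Theorem~\ref{thm_stab}, and term-by-term bounds of the coupling and stabilization terms via trace/inverse inequalities and interpolation estimates, finishing with the triangle inequality. The only cosmetic difference is that the paper takes $\Pi_h u$ to be the $L^2$-orthogonal projection rather than a generic interpolant, which does not affect the argument.
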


\begin{proof}
When $m=1$, the method (\ref{IP_method}) is the standard finite element method (\ref{method_m_1}) for 
Poisson equation with homogeneous Dirichlet boundary condition. So it is easy to see that (\ref{final_error_m_1}) holds.
In the following, we assume $m \geq 2$.

By Theorem~\ref{thm_wellposed}, the method (\ref{IP_method}) has the unique numerical solution $u_{h} \in V_{h}$. 

Since $u \in H_{0}^{m}(\Omega)$, it is easy to see that for any $0 \leq j \leq m-1$, 
every component of $D^{j}u$ is {continuous} across any face $F \in \calF_{h}^{\text{int}}$ 
and is equal to zero along $\partial\Omega$.
Therefore by Definition~\ref{def_couple} and Definition~\ref{def_stab_new}, we have 
\begin{align}
\label{err_consistent}
C_{h} (v_{h}, u) = S_{h}(u, v_{h}) = 0, \qquad \forall v_{h} \in V_{h}.
\end{align}

We denote by $\Pi_{h} u \in V_{h}$ the standard $L^{2}$-orthogonal projection of $u$ into $V_{h}$. 
We define $e_{u} = \Pi_{h} u - u_{h}$ and $\delta_{u} = \Pi_{h}u - u$. Since $u \in H^{s}(\Omega)$ 
and $s \geq 2m - 1$, we have 
\begin{align}
\label{approximation_props}
\Vert D^{j} \delta_{u} \Vert_{L^{2}(\calT_{h})} 
\leq C h^{\max ( \min (r + 1 - j, s -j), 0)} \Vert u\Vert_{H^{s}(\Omega)}, 
\quad \forall 0 \leq j \leq 2m -1.
\end{align}

We assume $m = 2\tilde{m}$ to be an even number. By (\ref{u_eq1}, \ref{err_consistent}) and 
the method (\ref{IP_method}), we have 
\begin{align}
\label{err_eq1}
& \Vert \Delta^{\tilde{m}} e_{u} \Vert_{L^{2}(\calT_{h})}^{2} + 2 C_{h}(e_{u}, e_{u}) + \tau S_{h}(e_{u}, e_{u}) \\
\nonumber
= & \left( \Delta^{\tilde{m}} \delta_{u}, \Delta^{\tilde{m}} e_{u} \right)_{\calT_{h}}
+ C_{h}(\delta_{u}, e_{u}) + C_{h} (e_{u}, \delta_{u}) + \tau S_{h}(\delta_{u}, e_{u}).
\end{align} 
By Lemma~\ref{lemma_wellposed} and (\ref{err_eq1}), we have 
\begin{align}
\label{err_eq2}
& \dfrac{1}{2} \Vert \Delta^{\tilde{m}} e_{u} \Vert_{L^{2}(\calT_{h})}^{2} + \dfrac{\tau}{2} S_{h}(e_{u}, e_{u}) \\
\nonumber
\leq & \left( \Delta^{\tilde{m}} \delta_{u}, \Delta^{\tilde{m}} e_{u} \right)_{\calT_{h}}
+ C_{h}(\delta_{u}, e_{u}) + C_{h} (e_{u}, \delta_{u}) + \tau S_{h}(\delta_{u}, e_{u}).
\end{align}

It is easy to see that 
\begin{align*}
& \left( \Delta^{\tilde{m}} \delta_{u}, \Delta^{\tilde{m}} e_{u} \right)_{\calT_{h}} \\
\leq  &\Vert \Delta^{\tilde{m}} \delta_{u} \Vert_{L^{2}(\calT_{h})} 
\Vert \Delta^{\tilde{m}} e_{u} \Vert_{\calT_{h}}
\leq C h^{\min (r + 1 - m, s - m)} \Vert \Delta^{\tilde{m}} e_{u} \Vert_{\calT_{h}} \Vert u\Vert_{H^{s}(\Omega)}, \\
& \tau S_{h}(\delta_{u}, e_{u}) \\
= & \tau \sum_{i=0}^{\tilde{m} -1} h^{-(4i + 1)} \langle \lb \nabla 
\Delta^{\tilde{m} - i -1}\delta_{u} \rb, \lb \nabla \Delta^{\tilde{m} - i -1} e_{u} \rb \rangle_{\calF_{h}} \\
& \qquad + \tau \sum_{i=0}^{\tilde{m} - 2} h^{-(4i + 3)} \langle \lb \Delta^{\tilde{m} - i-1}\delta_{u} \rb, 
 \lb \Delta^{\tilde{m} - i-1} e_{u}\rb  \rangle_{\calF_{h}} \\
\leq & \tau \sum_{i=0}^{\tilde{m} -1} \left( h^{-(4i + 1)} \Vert \lb \nabla 
\Delta^{\tilde{m} - i -1}\delta_{u} \rb \Vert_{L^{2}(\calF_{h})}^{2}  \right)^{\frac{1}{2}} 
\left( h^{-(4i + 1)} \Vert \lb \nabla \Delta^{\tilde{m} - i -1} e_{u} \rb 
\Vert_{L^{2}(\calF_{h})}^{2}  \right)^{\frac{1}{2}} \\
& \qquad + \tau \sum_{i=0}^{\tilde{m} - 2} \left( h^{-(4i + 3)} \Vert \lb 
\Delta^{\tilde{m} - i -1}\delta_{u} \rb\Vert_{L^{2}(\calF_{h})}^{2} \right)^{\frac{1}{2}} 
\left( h^{-(4i + 3)} \Vert \lb \Delta^{\tilde{m} - i -1} e_{u} 
\rb\Vert_{L^{2}(\calF_{h})}^{2} \right)^{\frac{1}{2}} \\
\leq & C \tau h^{\min (r + 1 - m, s - m)} \left( S_{h}(e_{u}, e_{u}) \right)^{\frac{1}{2}} 
\Vert u \Vert_{H^{s}(\Omega)}. 
\end{align*}
We have used trace inequality and (\ref{approximation_props}) to obtain the last inequality above.

By trace inequality  and (\ref{approximation_props}) again, we have 
\begin{align*}
& C_{h}(\delta_{u}, e_{u}) \\
= & - \sum_{i=0}^{\tilde{m} -1} \langle \Lb \Delta^{\tilde{m} + i} \delta_{u} \Rb, 
\lb \nabla \Delta^{\tilde{m} - i -1} e_{u} \rb \rangle_{\calF_{h}} 
+ \sum_{i=0}^{\tilde{m} - 2} \langle \Lb \nabla \Delta^{\tilde{m} + i} \delta_{u} \Rb, 
 \lb \Delta^{\tilde{m} - i-1} e_{u} \rb  \rangle_{\calF_{h}} \\
\leq & \sum_{i=0}^{\tilde{m} -1} \left( h^{4i + 1} \Vert \Lb \Delta^{\tilde{m} + i} \delta_{u} \Rb 
\Vert_{L^{2}(\calF_{h})}^{2} \right)^{\frac{1}{2}} \left( h^{-(4i + 1)} 
\Vert \lb \nabla \Delta^{\tilde{m} - i -1} e_{u} \rb \Vert_{L^{2}(\calF_{h})}^{2} \right)^{\frac{1}{2}} \\
 & \qquad + \sum_{i=0}^{\tilde{m} - 2} \left( h^{4i+3} \Vert \Lb \nabla\Delta^{\tilde{m} + i} \delta_{u} \Rb 
\Vert_{L^{2}(\calF_{h})}^{2} \right)^{\frac{1}{2}} \left( h^{-(4i+3)} \Vert \lb \Delta^{\tilde{m} - i-1} e_{u} \rb 
\Vert_{L^{2}(\calF_{h})}^{2} \right)^{\frac{1}{2}} \\
\leq & C\sum_{i=0}^{\tilde{m} -1} \left( h^{4i + 1} (h^{-1}\Vert \Delta^{\tilde{m} + i} \delta_{u}  
\Vert_{L^{2}(\calT_{h})}^{2} + h \Vert \Delta^{\tilde{m} + i} \delta_{u}  
\Vert_{H^{1}(\calT_{h})}^{2}) \right)^{\frac{1}{2}} \left( h^{-(4i + 1)} 
\Vert \lb \nabla \Delta^{\tilde{m} - i -1} e_{u} \rb \Vert_{L^{2}(\calF_{h})}^{2} \right)^{\frac{1}{2}} \\
 & \quad + C \sum_{i=0}^{\tilde{m} - 2} \left( h^{4i+3} ( h^{-1}\Vert \nabla\Delta^{\tilde{m} + i} \delta_{u} 
\Vert_{L^{2}(\calT_{h})}^{2} + h \Vert \nabla\Delta^{\tilde{m} + i} \delta_{u} 
\Vert_{H^{1}(\calT_{h})}^{2}) \right)^{\frac{1}{2}} \left( h^{-(4i+3)} \Vert \lb \Delta^{\tilde{m} - i-1} e_{u} \rb 
\Vert_{L^{2}(\calF_{h})}^{2} \right)^{\frac{1}{2}} \\
\leq & C h^{\min (r + 1 - m, s - m)} \left( S_{h}(e_{u}, e_{u}) \right)^{\frac{1}{2}} \Vert u\Vert_{H^{s}(\Omega)}.
\end{align*}
By inverse inequality and discrete trace inequality, 
\begin{align*}
& C_{h}(e_{u}, \delta_{u}) \\
= & - \sum_{i=0}^{\tilde{m} -1} \langle \Lb \Delta^{\tilde{m} + i} e_{u} \Rb, 
\lb \nabla \Delta^{\tilde{m} - i -1} \delta_{u} \rb \rangle_{\calF_{h}} 
+ \sum_{i=0}^{\tilde{m} - 2} \langle \Lb \nabla \Delta^{\tilde{m} + i} e_{u} \Rb, 
 \lb \Delta^{\tilde{m} - i-1} \delta_{u} \rb  \rangle_{\calF_{h}} \\
\leq & \sum_{i=0}^{\tilde{m} -1} \left( h^{4i + 1} \Vert \Lb \Delta^{\tilde{m} + i} e_{u} \Rb 
\Vert_{L^{2}(\calF_{h})}^{2} \right)^{\frac{1}{2}} \left( h^{-(4i + 1)} 
\Vert \lb \nabla \Delta^{\tilde{m} - i -1} \delta_{u} \rb \Vert_{L^{2}(\calF_{h})}^{2} \right)^{\frac{1}{2}} \\
 & \qquad + \sum_{i=0}^{\tilde{m} - 2} \left( h^{4i+3} \Vert \Lb \nabla\Delta^{\tilde{m} + i} e_{u} \Rb 
\Vert_{L^{2}(\calF_{h})}^{2} \right)^{\frac{1}{2}} \left( h^{-(4i+3)} \Vert \lb 
\Delta^{\tilde{m} - i-1} \delta_{u} \rb \Vert_{L^{2}(\calF_{h})}^{2} \right)^{\frac{1}{2}} \\
\leq & C\sum_{i=0}^{\tilde{m} -1} \left( h^{4i} \Vert \Delta^{\tilde{m} + i} e_{u} 
\Vert_{L^{2}(\calT_{h})}^{2} \right)^{\frac{1}{2}} \left( h^{-(4i + 1)} 
\Vert \lb \nabla \Delta^{\tilde{m} - i -1} \delta_{u} \rb \Vert_{L^{2}(\calF_{h})}^{2} \right)^{\frac{1}{2}} \\
 & \qquad + C \sum_{i=0}^{\tilde{m} - 2} \left( h^{4i+2} \Vert \nabla\Delta^{\tilde{m} + i} e_{u} 
\Vert_{L^{2}(\calT_{h})}^{2} \right)^{\frac{1}{2}} \left( h^{-(4i+3)} \Vert \lb 
\Delta^{\tilde{m} - i-1} \delta_{u} \rb \Vert_{L^{2}(\calF_{h})}^{2} \right)^{\frac{1}{2}} \\
\leq & C\sum_{i=0}^{\tilde{m} -1} \left( \Vert \Delta^{\tilde{m}} e_{u} 
\Vert_{L^{2}(\calT_{h})}^{2} \right)^{\frac{1}{2}} \left( h^{-(4i + 1)} 
\Vert \lb \nabla \Delta^{\tilde{m} - i -1} \delta_{u} \rb \Vert_{L^{2}(\calF_{h})}^{2} \right)^{\frac{1}{2}} \\
 & \qquad + C \sum_{i=0}^{\tilde{m} - 2} \left( \Vert \Delta^{\tilde{m}} e_{u} 
\Vert_{L^{2}(\calT_{h})}^{2} \right)^{\frac{1}{2}} \left( h^{-(4i+3)} \Vert \lb 
\Delta^{\tilde{m} - i-1} \delta_{u} \rb \Vert_{L^{2}(\calF_{h})}^{2} \right)^{\frac{1}{2}} \\
\leq & C h^{\min (r + 1 - m, s - m)} \Vert \Delta^{\tilde{m}} e_{u} \Vert_{L^{2}(\calT_{h})}^{2}
\Vert u \Vert_{H^{s}(\Omega)}. 
\end{align*}

Combing (\ref{err_eq2}) with above estimates, we have 
\begin{align*}
\Vert \Delta^{\tilde{m}} e_{u} \Vert_{L^{2}(\calT_{h})}^{2} +  \tau S_{h}(e_{u}, e_{u})
\leq C h^{2 \min (r + 1 - m, s - m)} \Vert u \Vert_{H^{s}(\Omega)}^{2}.  
\end{align*}
We have used the fact that $\tau$ is independent of $h$ to obtain the above inequality.
Then by {Theorem~\ref{thm_stab}}, we have 
\begin{align*}
\Vert e_{u}\Vert_{m, h}^{2} \leq C 
\left( \Vert \Delta^{\tilde{m}} e_{u} \Vert_{L^{2}(\calT_{h})}^{2} +  \tau S_{h}(e_{u}, e_{u}) \right) 
\leq C h^{2 \min (r + 1 - m, s - m)} \Vert u \Vert_{H^{s}(\Omega)}^{2}.
\end{align*}
Now we obtain the error estimate (\ref{final_error}) when $m\geq 2$ is an even number. It is similar to show that (\ref{final_error}) holds when $m\geq 2$ is an odd number. Therefore we can conclude that the proof is complete.
\end{proof}

{ 
\section{Error analysis under the low regularity assumption}
In the above section, we assume the exact solution $u \in H_{0}^{m}(\Omega) \cap H^{s}(\Omega)$ with $s \geq 2m - 1$. Since this regularity assumption may be high for the realistic problems, we further deduce the error analysis in this section for the exact solution under the low regularity assumption, i.e., $u \in H_{0}^{m}(\Omega)$. In this case, the Galerkin orthogonality does not hold true for the $C^0$ interior penalty method if $m\geq 2$. We derive the error analysis by the technique developed by Gudi in \cite{Gudi2010} which utilizes the analysis idea from the {\em a posteriori} error analysis.

Let $V:=H_{0}^{m}(\Omega)$ and $V^c_h $ be the $H^m$-conforming finite element space in $V$. One can refer to the construction 
of $H^m$-conforming finite element space in $V$ in any dimension according to a recent work in \cite{HuLinWu2021}.  For any 
$v,w\in V$, let $a(v,w) =(\Delta^{\tilde{m}} v, \Delta^{\tilde{m}} w)_{\Omega} $ if $m=2\tilde{m}$ and 
$a(v,w) = (\nabla \Delta^{\tilde{m}} v, \nabla \Delta^{\tilde{m}} w)_{\Omega}$ if $m=2\tilde{m}+1$.  
As the three abstract assumptions in \cite{Gudi2010}, firstly we assume there exists an enriching operator 
$E_h: V_h \rightarrow V^c_h$ such that
\begin{align}
\label{Ass_1}
\sum_{K\in \calT_{h}} h^{-2m}_K \|v - E_h v\|^2_{L^2(K)} + \|E_h v\|^2_V \leq C \|v\|^2_{m,h}, \quad \forall v\in V_h.
\end{align}
Actually, for the cases of $m=2$ and $3$, this enriching operator $E_h$ has been constructed by averaging technique and the above estimate has been derived in \cite{Gudi2010} and \cite{GudiNeilan2011}.

Secondly, by the definition of $a_h(\cdot,\cdot)$ in (\ref{IP_method_bilinear_form}) and Lemma \ref{lemma_wellposed}, choosing $\tau$ as in Theorem \ref{thm_wellposed}, we easily have that
\begin{align}
\label{Ass_2}
\|v_h\|^2_{m,h} \leq C a_h(v_h,v_h),\quad \forall v_h \in V_h.
\end{align}

Thirdly, we have the following estimate: for any $v\in V$, $w\in V^c_h$ and $v_h \in V_h$, it holds that
\begin{align}
\label{Ass_3}
|a(v,w) - a_h(v_h,w)| \leq C \|v-v_h\|_{m,h}\|w\|_V.
\end{align}
Actually, for $m=2\tilde{m}$, due to the fact that $w\in V^c_h$ and $v\in V$, we can derive 
\begin{align}
a(v,w) - a_h(v_h,w) &= (\Delta^{\tilde{m}} v,\Delta^{\tilde{m}} w) -  (\Delta^{\tilde{m}} v_h,\Delta^{\tilde{m}} w) -C_h(v_h,w)- C_h(w,v_h) - \tau S_h(v_h,w) \nn\\
& =  (\Delta^{\tilde{m}} (v-v_h),\Delta^{\tilde{m}} w) - C_h(w,v_h)\nn\\
& = (\Delta^{\tilde{m}} (v-v_h),\Delta^{\tilde{m}} w) - C_h(w,v-v_h). \label{er_a_a_h}
\end{align}
By the trace inequality and inverse estimate, we have
\begin{align*}
 &\quad C_h(w,v-v_h) \\
&=  -\sum_{i=0}^{\tilde{m} -1} \langle \Lb \Delta^{\tilde{m} + i} w \Rb, 
\lb \nabla \Delta^{\tilde{m} - i -1} (v-v_{h}) \rb \rangle_{\calF_{h}} 
+ \sum_{i=0}^{\tilde{m} - 2} \langle \Lb \nabla \Delta^{\tilde{m} + i} w \Rb, 
 \lb \Delta^{\tilde{m} - i-1} (v-v_{h})\rb  \rangle_{\calF_{h}}\\
 & \leq C \sum_{i=0}^{\tilde{m} -1}  \| \Delta^{\tilde{m}} w \|_{\calT_{h}}  h^{-\frac{1}{2}-2i}
 \| \lb \nabla \Delta^{\tilde{m} - i -1} (v-v_{h}) \rb \|_{\calF_{h}}\\
& \quad  + C \sum_{i=0}^{\tilde{m} - 2}  \| \Delta^{\tilde{m}} w \|_{\calT_{h}}  h^{-\frac{3}{2}-2i} \| \lb   \Delta^{\tilde{m} - i -1} (v-v_{h}) \rb \|_{\calF_{h}},
\end{align*} 
which, together with (\ref{er_a_a_h}), yields the estimate (\ref{Ass_3}). For the case of $m=2\tilde{m}+1$, one can similarly derive (\ref{Ass_3}) and we omit the details here.

By the estimates (\ref{Ass_1},\ref{Ass_2},\ref{Ass_3}) and following Lemma 2.1 in \cite{Gudi2010}, we have
\begin{align}\label{key_est}
\|u-u_h\|_{m,h} \leq C \inf_{v\in V_h}\left( \|u-v\|_{m,h}  +  \sup_{\phi \in V_h\setminus \{0\}} \frac{(f,\phi-E_h \phi)_\Omega - a_h(v,\phi-E_h \phi)}{\|\phi\|_{m,h}}\right).
\end{align}
In order to get the upper bound for the second term on the right-hand side of (\ref{key_est}), we first provide two lemmas.

\begin{lemma}\label{est_residual}
Let $v\in V_h$. There exists a positive constant $C$ independent of mesh size such that
\begin{align*}
\sum_{K \in \calT_{h}} h^{2m} \| f-(-1)^m\Delta^mv \|^2_{L^2(K)} \leq \begin{cases}  C\left( \|\Delta^{\tilde{m}}(u-v)\|^2_{\calT_{h}} + {\rm osc}^2_m(f)   \right),\quad \ \ {\rm if} \ m=2\tilde{m}, \\  C\left( \|\nabla\Delta^{\tilde{m}}(u-v)\|^2_{\calT_{h}} + {\rm osc}^2_m(f)   \right),\quad {\rm if} \ m=2\tilde{m}+1,
\end{cases}
\end{align*}
where 
\begin{align}\label{def_osc_f}
{\rm osc}_m(f) = \left( \sum_{K \in\calT_{h} } h^{2m}_K \inf_{\overline{f} \in P_{r-m}(K)}{ \|f - \overline{f}\|^2_{L^2(K)} }  \right)^{\frac{1}{2}}.
\end{align}
\end{lemma}
\begin{proof}
We provide the proof for the case of $m=2\tilde{m}$, and the case of $m=2\tilde{m}+1$ can be similarly deduced. Let $b_K \in  P_{m(d+1)}(K)\cap H^m_0(K)$ be the bubble function defined on $K$ such that $b_K (x_K ) = 1$, where $x_K$ is the barycenter of the element $K$. Let $\psi = b_K ( \overline{f}-(-1)^m\Delta^mv  )$ on $K \in \calT_{h}$ and $\psi = 0$ on $\Omega \setminus K$. We easily have that 
\begin{align}\label{est_bubble_el}
C_1 \|\overline{f}-(-1)^m\Delta^mv \|_{L^2(K)} \leq \|\psi\|_{L^2(K)} \leq C_2 \|\overline{f}-(-1)^m\Delta^mv \|_{L^2(K)}.
\end{align}
It follows integration by parts that
\begin{align*}
(f-(-1)^m\Delta^mv,\psi)_K = ((-1)^m\Delta^m u - (-1)^m\Delta^mv,\psi)_K = (\Delta^{\tilde{m}}(u-v),\Delta^{\tilde{m}}\psi)_K.
\end{align*}
By the inverse estimate, we further have
\begin{align*}
C \|\overline{f}-(-1)^m\Delta^mv \|^2_{L^2(K)} &\leq (\overline{f}-(-1)^m\Delta^mv,\psi)_K \\
& = (\overline{f} -f,\psi )_K +  (f-(-1)^m\Delta^mv,\psi)_K  \\
&  = (\overline{f} -f,\psi )_K +(\Delta^{\tilde{m}}(u-v),\Delta^{\tilde{m}}\psi)_K\\
& \leq C \left( \|f- \overline{f}\|_{L^2(K)} + h^{-m}_K \|  \Delta^{\tilde{m}}(u-v)\|_{L^2(K)}  \right) \|\psi\|_{L^2(K)},
\end{align*}
which, together with (\ref{est_bubble_el}), yields that
\[
h^{m}_K \|\overline{f}-(-1)^m\Delta^mv \|_{L^2(K)} \leq C  \left(h^{m}_K  \|f- \overline{f}\|_{L^2(K)} + \|  \Delta^{\tilde{m}}(u-v)\|_{L^2(K)}  \right).
\]
By the above estimate and the triangular inequality, we directly obtain
\[
h^{2m} \| f-(-1)^m\Delta^mv \|^2_{L^2(K)} \leq C \left(h^{2m}_K  \|f- \overline{f}\|^2_{L^2(K)} + \|  \Delta^{\tilde{m}}(u-v)\|^2_{L^2(K)}  \right),
\]
which yields the desired estimate.
\end{proof}

\begin{lemma}\label{lemma_jump_est}
Let $v\in V_h$. For $m=2\tilde{m}$, there exists a positive constant $C$ independent of mesh size such that, for $i=0,\cdots, \tilde{m}-1$,
\begin{align}
\label{jum_even_1}
\sum_{F \in \calF_h} h^{4i+1}_F \| \lb \Delta^{\tilde{m}+i} v \rb \|^2_{L^2(F)} &\leq C \left( \|\Delta^{\tilde{m}}(u-v)\|^2_{\calT_{h}} + {\rm osc}^2_m(f)   \right),\\
\label{jum_even_2}
\sum_{F \in \calF_h} h^{4i+3}_F \| \lb \nabla \Delta^{\tilde{m}+i} v \rb \|^2_{L^2(F)} &\leq C \left( \|\Delta^{\tilde{m}}(u-v)\|^2_{\calT_{h}} + {\rm osc}^2_m(f)   \right).
\end{align}
For $m=2\tilde{m}+1$, there exists a positive constant $C$ independent of mesh size such that
\begin{align}
\label{jum_odd_1}
\sum_{F \in \calF_h} h^{4i+3}_F \| \lb \Delta^{\tilde{m}+i+1} v \rb \|^2_{L^2(F)} &\leq C \left( \|\nabla\Delta^{\tilde{m}}(u-v)\|^2_{\calT_{h}} + {\rm osc}^2_m(f)   \right), \quad i=0,\cdots, \tilde{m}-1, \\
\label{jum_odd_2}
\sum_{F \in \calF_h} h^{4i+1}_F \| \lb \nabla \Delta^{\tilde{m}+i} v \rb \|^2_{L^2(F)} &\leq C \left( \| \nabla\Delta^{\tilde{m}}(u-v)\|^2_{\calT_{h}} + {\rm osc}^2_m(f)   \right), \quad i=0,\cdots, \tilde{m}.
\end{align}

\end{lemma}
\begin{proof}
For brevity, we only provide the proof for the case of $m=2\tilde{m}$. The estimates (\ref{jum_odd_1}) and (\ref{jum_odd_2}) for the case of $m=2\tilde{m}+1$ can be similarly deduced. The proof is based on the induction approach. 

Now we prove (\ref{jum_even_1}) with $i=0$. For any $F \in \calF^{\rm int}_h$, we denote $\omega_F = K^- \cup K^+ $ where $\partial K^- \cap \partial K^+ = F$. Let $\bfnu_{F}$ be the unit normal vector along $F$ pointing from $K^-$ to $K^+$. Let $\xi_1 \in P_{r-1}(\omega_F)$ be defined by 
\begin{align}
\Delta^{\tilde{m}-i} \xi_1|_F = 0,\quad i=1,\cdots,\tilde{m}, \label{xi_1_1} \\
\nabla \Delta^{\tilde{m}-i} \xi_1 \cdot \bfnu_{F} |_F =0,\quad i=2,\cdots,\tilde{m}, \label{xi_1_2}\\
\nabla \Delta^{\tilde{m}-1} \xi_1 \cdot \bfnu_{F} |_F = \lb \Delta^{\tilde{m}}v \rb_J |_F.  \label{xi_1_3}
\end{align}
For the construction of $\xi_1$, we can firstly assume $r=m$.  Let $\lambda^+_{d+1}$ and $\lambda^-_{d+1}$ be the linear basis functions at the nodes opposite to the face $F$ on $K^+$ and $K^-$ respectively. We choose $\xi_1|_{K^+} = C^+ (\lambda^+_{d+1})^{m-1}$ and $\xi_1|_{K^-} = C^- (\lambda^-_{d+1})^{m-1}$, where $C^+$ and $C^-$ are constants. It is obviously that $\xi_1$ satisfies (\ref{xi_1_1}) and (\ref{xi_1_2}). One can easily choose $C^+$ and $C^-$ such that $$C^+ \nabla \Delta^{\tilde{m}-1} (\lambda^+_{d+1})^{m-1} \cdot  \bfnu_{F} |_{\partial K^+\cap F} = C^- \nabla \Delta^{\tilde{m}-1} (\lambda^-_{d+1})^{m-1} \cdot  \bfnu_{F} |_{\partial K^-\cap F}=\lb \Delta^{\tilde{m}}v \rb_J |_F.$$
For $r> m$, one can similarly construct $\xi_1|_{K^+} = (\lambda^+_{d+1})^{m-1} \xi_1^+$ and $\xi_1|_{K^-} = (\lambda^-_{d+1})^{m-1} \xi_1^-$, where $\xi_1^+\in P_{r-m}(K^+), \xi_1^-\in P_{r-m}(K^-)$ such that $\xi_1$ satisfies (\ref{xi_1_1}-\ref{xi_1_3}).

Let $\xi_2 \in H^m_0(\omega_F)$ be a piecewise polynomial bubble function such that $\xi_2(m_F) =1$, where $m_F$ is the barycenter of $F$. Denote $\phi = \xi_1\xi_2$ on $\omega_F$ and extend it by zero on $\Omega \setminus \omega_F$. It follows from the definitions of $\xi_1$, $\xi_2$ and integration by parts that
\begin{align*}
C \| \lb  \Delta^{\tilde{m}} v \rb \|^2_{L^2(F)} &\leq  \langle \lb  \Delta^{\tilde{m}} v \rb_J,  \xi_2  \nabla \Delta^{\tilde{m}-1} \xi_1 \cdot \bfnu_{F}  \rangle_F \\
& = \langle \lb  \Delta^{\tilde{m}} v \rb_J,     \nabla \Delta^{\tilde{m}-1} \phi \cdot \bfnu_{F}  \rangle_F\\
& = ( \Delta^{\tilde{m}}v, \Delta^{\tilde{m}}\phi )_{\omega_F} +  (\nabla \Delta^{\tilde{m}}v,\nabla \Delta^{\tilde{m}-1}\phi )_{\omega_F} \\
& =  ( \Delta^{\tilde{m}}v, \Delta^{\tilde{m}}\phi )_{\omega_F}   - ( \Delta^{2 \tilde{m} }v,\phi )_{\omega_F} \\
& =  ( \Delta^{\tilde{m}}(v-u), \Delta^{\tilde{m}}\phi )_{\omega_F}   + (f- \Delta^{m}v,\phi )_{\omega_F}.
\end{align*}
By the scaling argument, for $K \in \omega_F$, we have
\[
\|\xi_1\|_{L^\infty(K)} \leq C h_F^{m-\frac{1}{2}-\frac{d}{2}} \|\lb \Delta^{\tilde{m}}v \rb_J\|_{L^2(F)} = C h_F^{m-\frac{1}{2}-\frac{d}{2}} \|\lb \Delta^{\tilde{m}}v \rb\|_{L^2(F)},
\]
which directly yields that
\begin{align}\label{phi_es}
\|\phi\|_{L^2(K)} \leq C \|\xi_1\|_{L^\infty(K)} \|\xi_2\|_{L^2(K)}\leq  Ch_F^{m-\frac{1}{2}}  \|\lb \Delta^{\tilde{m}}v \rb\|_{L^2(F)}.
\end{align}
By the inverse estimate, we have
\begin{align}
\label{es_1}
 \| \lb  \Delta^{\tilde{m}} v \rb \|^2_{L^2(F)} &\leq C \sum_{K \in \omega_F} \left( h^{-m}_K \| \Delta^{\tilde{m}}(u-v) \|_{L^2(K)} + \| f-\Delta^mv \|_{L^2(K)} \right) \|\phi\|_{L^2(K)}.
\end{align}
Combining (\ref{phi_es}) and (\ref{es_1}) yields
\begin{align}\label{es_2}
h_F  \| \lb  \Delta^{\tilde{m}} v \rb \|^2_{L^2(F)} &\leq C \sum_{K \in \omega_F} \left(  \| \Delta^{\tilde{m}}(u-v) \|^2_{L^2(K)} + h^{2m}_F\| f-\Delta^mv \|_{L^2(K)} \right).
\end{align}
The above estimate (\ref{es_2}) can be similarly deduced for the case of $F\in \partial \Omega$. Now combining (\ref{es_2}), Lemma \ref{est_residual} and summing over all $F\in \calF_h$, we get the estimate  (\ref{jum_even_1}) with $i=0$.

Next we prove (\ref{jum_even_2}) with $i=0$. For any $F \in \calF_h^{\rm int}$, let $\eta_1 \in P_{r-3}(\omega_F)$ be defined by $\Delta^{\tilde{m}-i}\eta_1|_F = 0$, $\nabla \Delta^{\tilde{m}-i}\eta_1 \cdot \bfnu_{F} |_F = 0$ with $i=2,\cdots,\tilde{m}$, and $\Delta^{\tilde{m}-1}\eta_1|_F = \lb\nabla \Delta^{\tilde{m}}v   \rb|_F$.  Here $\eta_1$ can be similarly constructed as $\xi_1$.  Let $\eta_2 \in H^m_0(\omega_F)$ be a piecewise polynomial bubble function such that $\eta_2(m_F) =1$, where $m_F$ is the barycenter of $F$. Denote $\psi = \eta_1\eta_2$ on $\omega_F$ and extend it by zero on $\Omega \setminus \omega_F$. By integration by parts, we have
\begin{align*}
C \| \lb  \nabla\Delta^{\tilde{m}} v \rb \|^2_{L^2(F)} &\leq \langle \lb \nabla \Delta^{\tilde{m}} v \rb,  \eta_2   \Delta^{\tilde{m}-1} \eta_1   \rangle_F \\
& = \langle \lb \nabla \Delta^{\tilde{m}} v \rb,     \Delta^{\tilde{m}-1} \psi  \rangle_F\\
& = ( \nabla \Delta^{\tilde{m}}v, \nabla\Delta^{\tilde{m}-1}\psi )_{\omega_F} +  ( \Delta^{\tilde{m}+1}v,  \Delta^{\tilde{m}-1}\psi )_{\omega_F} \\
& = -(\Delta^{\tilde{m}}v, \Delta^{\tilde{m}}\psi )_{\omega_F}  + \sum_{K\in \omega_F} \int_{\partial K} \lb \Delta^{\tilde{m}}v \rb \Lb  \nabla 
\Delta^{\tilde{m}-1}\psi \Rb + ( \Delta^{2\tilde{m}} v,\psi)_{\omega_F} \\
& =  (\Delta^{\tilde{m}}(u-v), \Delta^{\tilde{m}}\psi )_{\omega_F}  + \sum_{K\in \omega_F} \int_{\partial K} \lb \Delta^{\tilde{m}}v \rb \Lb  \nabla 
\Delta^{\tilde{m}-1}\psi \Rb + ( \Delta^{2\tilde{m}} v-f,\psi)_{\omega_F} .
\end{align*}
By the scaling argument, for $K \in \omega_F$, we have
\[
\|\eta_1\|_{L^\infty(K)} \leq C h_F^{m-\frac{3}{2}-\frac{d}{2}} \|\lb \nabla \Delta^{\tilde{m}}v \rb\|_{L^2(F)},
\]
which yields
\begin{align}
\label{est_2_1}
\|\psi\|_{L^2(K)} \leq  Ch_F^{m-\frac{3}{2}}  \|\lb \nabla \Delta^{\tilde{m}}v \rb\|_{L^2(F)}.
\end{align}
By the inverse estimate and the trace inequality, we get
\begin{align*}
 &\quad h^3_F \| \lb  \nabla\Delta^{\tilde{m}} v \rb \|^2_{L^2(F)} \\
&\leq C \sum_{K\in \omega_F} \left(  \| \Delta^{\tilde{m}}(u-v) \|_{L^2(K)} + h^{m}_F\| f-\Delta^mv \|_{L^2(K)} + h^{\frac{1}{2}}_F \| \lb  \Delta^{\tilde{m}} v \rb \|_{L^2(\partial K)}  \right) h^{3-m}_F \|\psi\|_{L^2(K)} .
\end{align*}
The above estimate can be similarly deduced for the case of $F\in \partial \Omega$. Combining the above estimate with (\ref{est_2_1}), (\ref{es_2}), Lemma \ref{est_residual} and summing over all $F\in \calF_h$, we obtain the estimate (\ref{jum_even_2}) with $i=0$.

We assume (\ref{jum_even_1}) and (\ref{jum_even_2}) hold true for $0<i=k\leq \tilde{m}-2$, we would like to prove that (\ref{jum_even_1}) and (\ref{jum_even_2}) hold true with $i=k+1$. Since the derivations are similar, we only show the proof for (\ref{jum_even_1}) with $i=k+1$. 

For any $F\in \calF_h^{\rm int}$, let $\gamma_1 \in P_{r-4k-5}(\omega_F)$ be defined by $ \Delta^{\tilde{m}-l} \gamma_1|_F = 0$ with $l=k+2,\cdots,\tilde{m}$, $\nabla \Delta^{\tilde{m}-l} \gamma_1 \cdot \bfnu_{F} |_F =0 $ with $l=k+3,\cdots,\tilde{m}$, $\nabla \Delta^{\tilde{m}-k-2} \gamma_1 \cdot \bfnu_{F} |_F = \lb \Delta^{\tilde{m}+k+1}v \rb_J |_F$.  Here $\gamma_1$ can be similarly constructed as $\xi_1$.  Let $\gamma_2 \in H^m_0(\omega_F)$ be a piecewise polynomial bubble function such that 
$\gamma_2(m_F) =1$, where $m_F$ is the barycenter of $F$. Denote $\varphi = \gamma_1\gamma_2$ on $\omega_F$ 
and extend it by zero on $\Omega \setminus \omega_F$. By integration by parts, we have
\begin{align}
C \| \lb \Delta^{\tilde{m}+k+1} v \rb \|^2_{L^2(F)} &\leq  \langle \lb  \Delta^{\tilde{m}+k+1} v \rb_J,  \gamma_2  \nabla \Delta^{\tilde{m}-k-2} \gamma_1 \cdot \bfnu_{F}  \rangle_F \nn\\ 
& = \langle \lb  \Delta^{\tilde{m}+k+1} v \rb_J,    \nabla \Delta^{\tilde{m}-k-2} \varphi \cdot \bfnu_{F}  \rangle_F \nn \\
& = \left(   \Delta^{\tilde{m}+k+1}v, \Delta^{\tilde{m}-k-1}\varphi \right)_{\omega_F} +  \left(\nabla \Delta^{\tilde{m}+k+1}v,\nabla \Delta^{\tilde{m}-k-2}\varphi \right)_{\omega_F}. \label{4_7_ind_1}
\end{align}
By the definition of $\varphi$, integration by parts yields
\begin{align}
(\nabla \Delta^{\tilde{m}+k+1}v,\nabla \Delta^{\tilde{m}-k-2}\varphi )_{\omega_F} = -( \Delta^{2\tilde{m}} v,\varphi )_{\omega_F}.  \label{4_7_ind_2}
\end{align}
We also have
\begin{align}
\left(   \Delta^{\tilde{m}+k+1}v, \Delta^{\tilde{m}-k-1}\varphi \right)_{\omega_F} &= \left(   \Delta^{\tilde{m}+k}v, \Delta^{\tilde{m}-k}\varphi \right)_{\omega_F} -\int_F \lb \Delta^{\tilde{m}+k}v  \rb_J \nabla \Delta^{\tilde{m}-k-1}\varphi \cdot \bfnu_F \nn  \\
&\quad + \int_F \lb \nabla \Delta^{\tilde{m}+k}v  \rb \Delta^{\tilde{m}-k-1}\varphi \nn \\
&:=T_0+T_1+T_2.\label{4_7_ind_3}
\end{align} 
By the scaling argument, for $K \in \omega_F$, we have
\[
\|\gamma_1\|_{L^\infty(K)} \leq C h_F^{2\tilde{m}-2k -\frac{5}{2}-\frac{d}{2}} \| \lb  \Delta^{\tilde{m}+k+1} v \rb\|_{L^2(F)},
\]
which yields
\begin{align}
\label{induction_1}
\| \varphi \|_{L^2(K)}\leq   C h_F^{2\tilde{m}-2k -\frac{5}{2}} \| \lb  \Delta^{\tilde{m}+k+1} v \rb\|_{L^2(F)}.
\end{align}
By the trace inequality, inverse estimate and (\ref{induction_1}), we obtain
\begin{align*}
T_1 & \leq  C \| \lb \Delta^{\tilde{m}+k}v  \rb\|_{L^2(F)} h_F^{-\frac{1}{2}} \|\nabla \Delta^{\tilde{m}-k-1}\varphi\|_{L^2(\omega_F)}\\
&\leq C \| \lb \Delta^{\tilde{m}+k}v  \rb\|_{L^2(F)}h_F^{-\frac{1}{2}} h_F^{-(2\tilde{m}-2k-1)} \|\varphi\|_{L^2(\omega_F)}\\
& \leq C h^{-2}_F \| \lb \Delta^{\tilde{m}+k}v  \rb\|_{L^2(F)} \|\lb  \Delta^{\tilde{m}+k+1} v \rb\|_{L^2(F)}.
\end{align*}
Similarly, by the trace inequality, inverse estimate and (\ref{induction_1}), we have
\begin{align*}
  T_2 \leq Ch^{-1}_F \|\lb \nabla \Delta^{\tilde{m}+k} v \rb\|_{L^2(F)} \| \lb \Delta^{\tilde{m}+k+1}v  \rb\|_{L^2(F)}   .
\end{align*}
For the estimate of $T_0$, following integration by parts, the trace inequality, inverse estimate and (\ref{induction_1}) yields
\begin{align*}
T_0 & = (\Delta^{\tilde{m}}v,\Delta^{\tilde{m}}\varphi)_{\omega_F} -\sum_{l=0}^{k-1} \int_F \lb  \Delta^{\tilde{m}+l} v \rb_J \nabla \Delta^{\tilde{m}-l-1} \varphi \cdot\bfnu_F + \sum_{l=0}^{k-1} \int_F \lb \nabla \Delta^{\tilde{m}+l} v \rb \Delta^{\tilde{m}-l-1} \varphi,\\
&\leq (\Delta^{\tilde{m}}v,\Delta^{\tilde{m}}\varphi)_{\omega_F} + C \sum_{l=0}^{k-1}  \|  \lb  \Delta^{\tilde{m}+l} v \rb \|_{L^2(F)}h^{-\frac{1}{2}}_F  \|\nabla \Delta^{\tilde{m}-l-1} \varphi \|_{L^2(\omega_F)}     \\
&\quad + C \sum_{l=0}^{k-1}   \|  \lb \nabla \Delta^{\tilde{m}+l} v \rb \|_{L^2(F)}h^{-\frac{1}{2}}_F  \|  \Delta^{\tilde{m}-l-1} \varphi \|_{L^2(\omega_F)} \\
&\leq  (\Delta^{\tilde{m}}v,\Delta^{\tilde{m}}\varphi)_{\omega_F} \\
&\quad + C \sum_{l=0}^{k-1} \left(  h^{2l+\frac{1}{2}}_F  \|  \lb \nabla \Delta^{\tilde{m}+l} v \rb \|_{L^2(F)} + h^{2l+\frac{3}{2}}_F  \|  \lb \nabla \Delta^{\tilde{m}+l} v \rb \|_{L^2(F)}  \right) h^{-2k-\frac{5}{2}}_F  \| \lb \Delta^{\tilde{m}+k+1}v  \rb\|_{L^2(F)} .
\end{align*}
Combining (\ref{4_7_ind_1}), (\ref{4_7_ind_2}), (\ref{4_7_ind_3}) and the upper bounds for $T_0$, $T_1$ and $T_2$, we have
\begin{align}
\label{4_7_ind_4}
&C \| \lb \Delta^{\tilde{m}+k+1} v \rb \|^2_{L^2(F)} \leq -( \Delta^{2\tilde{m}} v,\varphi )_{\omega_F} + (\Delta^{\tilde{m}}v,\Delta^{\tilde{m}}\varphi)_{\omega_F}\\
&\quad + C \sum_{l=0}^{k-1} \left(  h^{2l+\frac{1}{2}}_F  \|  \lb \nabla \Delta^{\tilde{m}+l} v \rb \|_{L^2(F)} + h^{2l+\frac{3}{2}}_F  \|  \lb \nabla \Delta^{\tilde{m}+l} v \rb \|_{L^2(F)}  \right) h^{-2k-\frac{5}{2}}_F  \| \lb \Delta^{\tilde{m}+k+1}v  \rb\|_{L^2(F)} \nn\\
&\quad+C h^{-2}_F \| \lb \Delta^{\tilde{m}+k}v  \rb\|_{L^2(F)} \|\lb  \Delta^{\tilde{m}+k+1} v \rb\|_{L^2(F)} +Ch^{-1}_F   \|\lb \nabla \Delta^{\tilde{m}+k} v \rb\|_{L^2(F)}\| \lb \Delta^{\tilde{m}+k+1}v  \rb\|_{L^2(F)}  .\nn
\end{align}
We note that
\[
-( \Delta^{2\tilde{m}} v,\varphi )_{\omega_F} + (\Delta^{\tilde{m}}v,\Delta^{\tilde{m}}\varphi)_{\omega_F}=
(f- \Delta^{2\tilde{m}} v,\varphi )_{\omega_F} + (\Delta^{\tilde{m}}v - \Delta^{\tilde{m}}u ,\Delta^{\tilde{m}}\varphi)_{\omega_F}.
\]
For any $F\in \partial \Omega$, we can derive the similar estimate as (\ref{4_7_ind_4}). Now combining (\ref{4_7_ind_4}), the Cauchy-Schwarz inequality, the estimates of (\ref{jum_even_1}) and (\ref{jum_even_2}) with $i\leq k$, the inverse estimate and (\ref{induction_1}) and summing over all $F\in \calF_h$, we can finally obtain (\ref{jum_even_1}) with $i=k+1$.
\end{proof}

Now we can start to derive the upper bound for the second term on the right-hand side of (\ref{key_est}). We provide the estimate for the case of $m=2\tilde{m}$, and the estimate for the case of $m=2\tilde{m}+1$ can be similarly obtained. For any $\phi \in V_h \setminus \{0\}$, let $\zeta = \phi - E_h \phi$. By the definition of $a_h(\cdot,\cdot)$ in (\ref{IP_method_bilinear_form}) and integration by parts, we have
\begin{align*}
&\quad (f,\zeta)_\Omega - a_h(v,\zeta) \\
&= (f,\zeta)_\Omega -  (\Delta^{\tilde{m}} v, \Delta^{\tilde{m}} \zeta)_{\calT_{h}} - C_{h}( v,\zeta) - C_{h}(\zeta,v) 
-\tau S_{h}(v, \zeta)  \\
& = (f - (-1)^m \Delta^m v, \zeta)_\Omega - \sum^{\tilde{m}-1}_{i=0} \langle  \Lb \Delta^{\tilde{m}+i}v  \Rb , \lb  \nabla \Delta^{\tilde{m}-i-1} \zeta \rb \rangle_{\calF_h} - \sum^{\tilde{m}-1}_{i=0} \langle  \lb \Delta^{\tilde{m}+i}v  \rb , \Lb  \nabla \Delta^{\tilde{m}-i-1} \zeta \Rb \rangle_{\calF_h}\\
&\quad + \sum^{\tilde{m}-2}_{i=0} \langle  \Lb \nabla \Delta^{\tilde{m}+i}v  \Rb , \lb    \Delta^{\tilde{m}-i-1} \zeta \rb \rangle_{\calF_h} +
 \sum^{\tilde{m}-1}_{i=0} \langle  \lb \nabla \Delta^{\tilde{m}+i}v  \rb , \Lb    \Delta^{\tilde{m}-i-1} \zeta \Rb \rangle_{\calF_h}
 \\
 &\quad - C_{h}( v,\zeta) - C_{h}(\zeta,v) 
-\tau S_{h}(v, \zeta)  .
\end{align*}
By the definition of $C_h(\cdot,\cdot)$, the trace inequality, inverse estimate and (\ref{Ass_1}), we further have
\begin{align*}
&\quad (f,\zeta)_\Omega - a_h(v,\zeta) \\
& = (f - (-1)^m \Delta^m v, \zeta)_\Omega  - \sum^{\tilde{m}-1}_{i=0} \langle  \lb \Delta^{\tilde{m}+i}v  \rb , \Lb  \nabla \Delta^{\tilde{m}-i-1} \zeta \Rb \rangle_{\calF_h}  +
 \sum^{\tilde{m}-1}_{i=0} \langle  \lb \nabla \Delta^{\tilde{m}+i}v  \rb , \Lb    \Delta^{\tilde{m}-i-1} \zeta \Rb \rangle_{\calF_h}
 \\
&\quad+ \sum_{i=0}^{\tilde{m} -1} \langle \Lb \Delta^{\tilde{m} + i} \zeta \Rb, 
\lb \nabla \Delta^{\tilde{m} - i -1}v \rb \rangle_{\calF_{h}} 
- \sum_{i=0}^{\tilde{m} - 2} \langle \Lb \nabla \Delta^{\tilde{m} + i}\zeta\Rb, 
 \lb \Delta^{\tilde{m} - i-1}v \rb  \rangle_{\calF_{h}}\\
 &\quad  -\tau \sum_{i=0}^{\tilde{m} -1} h^{-(4i + 1)} \langle \lb \nabla \Delta^{\tilde{m} - i -1}v \rb, 
\lb \nabla \Delta^{\tilde{m} - i -1}\zeta \rb \rangle_{\calF_{h}} 
-\tau \sum_{i=0}^{\tilde{m} - 2} h^{-(4i + 3)} \langle \lb \Delta^{\tilde{m} - i-1} v\rb, 
 \lb \Delta^{\tilde{m} - i-1}\zeta \rb  \rangle_{\calF_{h}}\\
&\leq  \|f - (-1)^m \Delta^m v\|_{L^2(\calT_h)} \|\zeta\|_{L^2(\calT_h)} + C \sum^{\tilde{m}-1}_{i=0} \|\lb \Delta^{\tilde{m}+i}v  \rb\|_{L^2(\calF_h)}h^{-2\tilde{m}+2i+\frac{1}{2}}\|\zeta\|_{L^2(\calT_h)}\\
&\quad +  C\sum^{\tilde{m}-1}_{i=0}\|\lb \nabla \Delta^{\tilde{m}+i}v  \rb\|_{L^2(\calF_h)}h^{-2\tilde{m}+2i+\frac{3}{2}}\|\zeta\|_{L^2(\calT_h)}\\
&\quad+ C\sum^{\tilde{m}-1}_{i=0} \|\lb \nabla \Delta^{\tilde{m}-i-1}v  \rb\|_{L^2(\calF_h)} h^{-2\tilde{m}-2i-\frac{1}{2}}\|\zeta\|_{L^2(\calT_h)}\\
&\quad+ C\sum^{\tilde{m}-2}_{i=0} \|\lb   \Delta^{\tilde{m}-i-1}v  \rb\|_{L^2(\calF_h)} h^{-2\tilde{m}-2i-\frac{3}{2}}\|\zeta\|_{L^2(\calT_h)}\\
&\quad + C  \sum^{\tilde{m}-1}_{i=0}h^{-(4i + 1)} \|\lb \nabla \Delta^{\tilde{m} - i -1}v \rb\|_{L^2(\calF_h)} \|\lb\nabla \Delta^{\tilde{m} - i -1}\zeta \rb\|_{L^2(\calF_h)}\\
&\quad + C  \sum^{\tilde{m}-2}_{i=0}h^{-(4i + 3)} \|\lb   \Delta^{\tilde{m} - i -1}v \rb\|_{L^2(\calF_h)} \| \lb \Delta^{\tilde{m} - i -1}\zeta \rb\|_{L^2(\calF_h)}\\
& \leq Ch^m \|f - (-1)^m \Delta^m v\|_{L^2(\calT_h)} \|\phi\|_{m,h} + C \sum^{\tilde{m}-1}_{i=0}  h^{2i+\frac{1}{2}}\|\lb \Delta^{\tilde{m}+i}v  \rb\|_{L^2(\calF_h)}\|\phi\|_{m,h}\\
&\quad +  C\sum^{\tilde{m}-1}_{i=0}h^{2i+\frac{3}{2}}\|\lb \nabla \Delta^{\tilde{m}+i}v  \rb\|_{L^2(\calF_h)}\|\phi\|_{m,h}+C\sum^{\tilde{m}-1}_{i=0}h^{-2i-\frac{1}{2}} \|\lb \nabla \Delta^{\tilde{m}-i-1}v  \rb\|_{L^2(\calF_h)} \|\phi\|_{m,h}\\
&\quad+ C\sum^{\tilde{m}-2}_{i=0} h^{-2i-\frac{3}{2}}\|\lb   \Delta^{\tilde{m}-i-1}v  \rb\|_{L^2(\calF_h)} \|\phi\|_{m,h}\\
&\quad + C  \sum^{\tilde{m}-1}_{i=0}h^{-(4i + 1)} \|\lb \nabla \Delta^{\tilde{m} - i -1}v \rb\|_{L^2(\calF_h)} \|\lb\nabla \Delta^{\tilde{m} - i -1}\phi \rb\|_{L^2(\calF_h)}\\
&\quad + C  \sum^{\tilde{m}-2}_{i=0}h^{-(4i + 3)} \|\lb   \Delta^{\tilde{m} - i -1}v \rb\|_{L^2(\calF_h)} \| \lb \Delta^{\tilde{m} - i -1}\phi \rb\|_{L^2(\calF_h)}.
\end{align*}
Combining the above estimate, Lemma \ref{est_residual} and Lemma \ref{lemma_jump_est}, we directly have
\begin{align*}
(f,\zeta)_\Omega - a_h(v,\zeta) \leq C \left( \|\Delta^{\tilde{m}}(u-v)\|_{\calT_{h}} + {\rm osc}_m(f)  \right)  \|\phi\|_{m,h},
\end{align*}
which yields
\begin{align}
\label{key_est_2}
 \sup_{\phi \in V_h\setminus \{0\}} \frac{(f,\phi-E_h \phi)_\Omega - a_h(v,\phi-E_h \phi)}{\|\phi\|_{m,h}}  \leq  C \left( \| u-v\|_{m,h} + {\rm osc}_m(f)  \right) .
\end{align}
The above estimate (\ref{key_est_2}) can be similarly deduced for the case of $m=2\tilde{m}+1$ and we omit the details here.

Now by the estimates (\ref{key_est}) and (\ref{key_est_2}), we obtain the following convergence result for the $C^0$ interior penalty method for the $m$th-Laplace equation (\ref{m_Laplace_eqs}) with $m\geq 2$.
\begin{theorem}
For $m\geq 2$, we assume that the exact solution $u \in H_{0}^{m}(\Omega)$ for (\ref{m_Laplace_eqs}), $\tau \geq \tau_{0} \geq 1$ 
where $\tau_{0}$ is the same as Theorem~\ref{thm_wellposed},  and there exists an enriching operator $E_h: V_h \rightarrow V^c_h$ 
such that (\ref{Ass_1}) holds true. Then there is a positive constant $C$ independent of $h$ such that
\begin{align}
\label{final_error_Hm}
\Vert u - u_{h}\Vert_{m, h} \leq C   \inf_{v\in V_h}  \left( \| u-v\|_{m,h} + {\rm osc}_m(f)  \right).
\end{align}
Here $u_{h} \in V_{h}$ is the numerical solution of the method (\ref{IP_method}). 
\end{theorem}

Now we immediately have the following estimates.  Assuming the exact solution $u \in H_{0}^{m}(\Omega) \cap H^s(\Omega)$ for (\ref{m_Laplace_eqs}), $s>m$, we have
\[
\Vert u - u_{h}\Vert_{m, h} \leq C    \left( h^{\min (r + 1 - m, s - m)} \Vert u \Vert_{H^{s}(\Omega)}+ {\rm osc}_m(f)  \right).
\]
In particular, if the oscillation term ${\rm osc}_m(f) $ is zero, we have
\[
\Vert u - u_{h}\Vert_{m, h} \leq C     h^{\min (r + 1 - m, s - m)} \Vert u \Vert_{H^{s}(\Omega)}.
\]

}

\section{Numerical experiments and discussions}

In this section, we provide several numerical experiments to  verify the theoretical prediction of the  $C^{0}$ interior penalty finite element method proposed in the previous sections in two and three dimensions. We calculate the rate of convergence of $\left\|u-u_{h}\right\|_{m,h}$  in various discrete  $H^m$ norms and compare each computed rate with its theoretical estimate. It is pointed out that the estimated convergence rates have very little dependency on the particular value when $\tau=O(1)$, so we choose {$\tau = 1$} in the following tests. { All the numerical experiments are carried out in C, and  the resulting linear algebraic systems are solved using GMRES solvers from the PETSc package \cite{petsc}. }


\subsection*{Example 1.}  For this test, we solve \eqref{method_m_2}, \eqref{method_m_3} and \eqref{method_m_4}, namely $m=2$, 3 and 4, respectively,  using the standard  $r$-th order piecewise continuous $H^1$-conforming finite element space {$V_{h}$} defined in Section~\ref{sec:C0IP} with $\Omega=(0,1)^{2}$. We use the following data:
$$
\begin{aligned}
f(x,y)=&  2^{m} \pi^{2m} \sin (\pi x) \sin (\pi y),
\end{aligned}
$$
so that the exact solution is
$$
u(x,y)=\sin (\pi x) \sin (\pi y),
$$
which satisfies the $m$th-Laplace equation \eqref{m_Laplace_eq1} and homogeneous boundary conditions \eqref{m_Laplace_eq2}.

We list the errors along with their estimated rates of convergence in Tables 1 and 2 when $r=m$ and $r=m+1$, respectively. { It is remarked that long double in C99 standard is used to represent extended precision floating point value for the 4th-order Laplacian operator, which is accurate up to $10^{-20}$.} The tables indicate the following rates of convergence:
$$
\begin{array}{ll}
\left\|u-u_{h}\right\|_{m, h}=O(h),    & \mbox{when} \quad r=m ,\\
\left\|u-u_{h}\right\|_{m, h}=O (h^{2} ),  & \mbox{when} \quad r=m+1.
\end{array}
$$

\begin{table}[bth]
    \centering
    
\caption{Example 1: Errors with estimated rates of convergence when $r=m$ and $m=2$, 3 and 4, respectively. }  \label{tab:ex1a}

\begin{tabular}{c|cc|cc|cc}
\hline $1 / h$ & $\left\|u-u_{h}\right\|_{2, h}$ & Order 
               & $\left\|u-u_{h}\right\|_{3, h}$ & Order 
               & $\left\|u-u_{h}\right\|_{4, h}$ & Order \\
\hline 
8  & $1.1095\mathrm{e}-1$ & $-$    & $4.5054\mathrm{e}-1$ & $-$    & $6.3198\mathrm{e}-1$ & $-$ \\
16 & $5.9870\mathrm{e}-2$ & $0.89$ & $2.4651\mathrm{e}-1$ & $0.87$ & $3.5797\mathrm{e}-1$ & $0.82$ \\
32 & $3.0564\mathrm{e}-2$ & $0.97$ & $1.2672\mathrm{e}-1$ & $0.96$ & $1.9051\mathrm{e}-1$ & $0.91$ \\
64 & $1.5388\mathrm{e}-2$ & $0.99$ & $6.4245\mathrm{e}-2$ & $0.98$ & $9.7934\mathrm{e}-2$ & $0.96$ \\
\hline
\end{tabular}

\end{table}

\begin{table}[bth]
    \centering
    
\caption{Example 1: Errors with estimated rates of convergence when $r=m+1$ and $m=2$, 3 and 4, respectively. }  \label{tab:ex1b}

\begin{tabular}{c|cc|cc|cc}
\hline $1 / h$ & $\left\|u-u_{h}\right\|_{2, h}$ & Order 
               & $\left\|u-u_{h}\right\|_{3, h}$ & Order 
               & $\left\|u-u_{h}\right\|_{4, h}$ & Order \\
\hline 
8  & $2.5510\mathrm{e}-2$ & $-$    & $4.8538\mathrm{e}-2$ & $-$    & $9.0579\mathrm{e}-2$ & $-$ \\
16 & $6.7880\mathrm{e}-3$ & $1.91$ & $1.3371\mathrm{e}-2$ & $1.86$ & $2.5477\mathrm{e}-2$ & $1.83$ \\
32 & $1.7207\mathrm{e}-3$ & $1.98$ & $3.5090\mathrm{e}-3$ & $1.93$ & $6.8738\mathrm{e}-3$ & $1.89$ \\
64 & $4.3317\mathrm{e}-4$ & $1.99$ & $8.9566\mathrm{e}-4$ & $1.97$ & $1.8039\mathrm{e}-3$ & $1.93$ \\
\hline
\end{tabular}

\end{table}

\subsection*{Example 2.}  
In the second example, we test the proposed method in which the solutions have partial regularity on a convex domain \cite{GudiNeilan2011} and a non-convex one \cite{{WuXu1}}, respectively. To this end, we solve the third-Laplace equation 
$$
(-\Delta)^{3} u=f\,.
$$
The first solution is defined on the square domain $\Omega_1=(0,1)^{2}$ with homogeneous Dirichlet boundary conditions. The data $f$ is chosen such that the exact solution is given by
$$
u_1(x,y) = \left(x^{2}+y^{2}\right)^{7.1 / 4}\left(x-x^{2}\right)^{3}\left(y-y^{2}\right)^{3}\,.
$$
Here $u_1\in H^s(\Omega_1)$ and $4\le s<4.1$.

While the second solution is on the $2 \mathrm{D}$ L-shaped domain $\Omega_2 = (-1,1)^{2} \backslash[0,1) \times(-1,0]$  with Dirichlet boundary conditions given explicitly by 
$$
u_2(r,\theta) = r^{2.5} \sin (2.5 \theta)\,.
$$
where $(r, \theta)$ are polar coordinates. Here $f=0$ and $u_2 \in H^{3+1/2}(\Omega_2)$ due to the singularity at the origin. 

In both cases, the observed errors of the proposed method  converge asymptotically  with the optimal order  $h$  and  $h^{1 / 2}$, respectively, in the discrete $H^{3}$ norm, as shown in Table~\ref{tab:ex2}.

\begin{table}[bth]
    \centering
    
\caption{Example 2: Errors with estimated rates of convergence when $r=m=3$. }  \label{tab:ex2}

\begin{tabular}{c|cc|cc}
\hline $1 / h$ & $\left\|u_1-u_{1,h}\right\|_{3, h,\Omega_1}$ & Order 
               & $\left\|u_2-u_{2,h}\right\|_{3, h,\Omega_2}$ & Order \\
\hline 
8  & $7.7412\mathrm{e}-2$ & $-$    & $5.2910\mathrm{e}-1$ & $-$   \\
16 & $4.1198\mathrm{e}-2$ & $0.91$ & $4.1801\mathrm{e}-1$ & $0.34$   \\
32 & $2.1623\mathrm{e}-2$ & $0.93$ & $3.0600\mathrm{e}-2$ & $0.45$   \\
64 & $1.0962\mathrm{e}-2$ & $0.98$ & $2.1488\mathrm{e}-2$ & $0.51$   \\
\hline
\end{tabular}

\end{table}

\subsection*{Example 3.}   
Our last example is a three-dimensional  problem. We take the cubic domain $(0,1)^{3}$ as the computational domain and the exact solution $u$ is given by
$$
u(x,y,z)=\sin (\pi x) \sin (\pi y)  \sin (\pi z),
$$
which satisfies the third-Laplace equation \eqref{m_Laplace_eq1} ($m=3$) and 
homogeneous boundary conditions \eqref{m_Laplace_eq2}.

We list the errors and rates of convergence in Table~\ref{tab:ex3}, 
which indicates that the computed solution converges asymptotically linearly to the exact solution in the discrete $H^3$ norm. The observed rate is in agreement with  Theorem~\ref{thm_conv}.

\begin{table}[bth]
    \centering
    
\caption{Example 3: Errors with estimated rates of convergence when $r=m=3$. }  \label{tab:ex3}

\begin{tabular}{c|cc}
\hline $1 / h$ & $\left\|u-u_{h}\right\|_{3, h}$ & Order 
              \\
\hline 
8  & $3.1836\mathrm{e}-1$ & $-$       \\
16 & $1.6594\mathrm{e}-1$ & $0.94$  \\
32 & $8.5896\mathrm{e}-2$ & $0.95$  \\
64 & $4.3247\mathrm{e}-2$ & $0.99$   \\
\hline
\end{tabular}

\end{table}

\section{Conclusion}
A $C^{0}$ interior penalty method is considered for $m$th-Laplace equation on bounded Lipschitz polyhedral domain in $\mathbb{R}^{d}$ in this paper. In order to avoid computing $D^{m}$ of numerical solution on each element, we reformulate the $C^{0}$ interior penalty method for the odd and even $m$ respectively, and only the gradient and Laplace operators are used in the new method. A rigorous and detailed analysis is given for the key estimate that the discrete $H^{m}$-norm of the solution can be bounded by the natural energy semi-norm associated with our method. Then the stability estimate and the optimal error estimates with respect to discrete $H^{m}$-norm are achieved. {The error estimate under the low regularity assumption of the exact solution is also provided.} We believe that the proposed $C^{0}$ interior penalty method for $m$th-Laplace equation can be applied for the nonlinear high order partial differential equations which will be our consideration in future.


\begin{thebibliography}{10}

\bibitem{Antonietti2020}
{P.F.~Antonietti, G.~Manzini and M.~Verani}, {\em The conforming virtual element method for polyharmonic 
problems}, Comput. Math. Appl., \textbf{79} (2020), pp. 2021--2034.

\bibitem{Barrett2004}
J. W. Barrett, S. Langdon, and R. N\"urnberg, {\em Finite element approximation of a sixth order nonlinear degenerate parabolic equation}, Numer. Math., \textbf{96} (2004), pp. 401--434.

\bibitem{petsc}
{S.~Balay et al.},
{\em {PETS}c {W}eb page}, {https://petsc.org/},  {2021}.

\bibitem{Bramble1970}
{J.H.~Bramble and M.~Zl\'amal}, {\em Triangular elements in the finite element method}, Math. Comp., \textbf{24} (1970), pp. 809--820.

\bibitem{ch1958}
J.~W. Cahn and J.~E. Hilliard, {\em Free energy of a nonuniform system I. interfacial free energy}, J Chem. Phys., \textbf{28} (1958), pp. 258--267.

\bibitem{Chang2001}
A.~S. Chang and W. Chen, {\em A note on a class of higher order comformally covariant equations}, Discrete and Continuous Dymanical Systems, \textbf{7} (2001), pp. 275--281. 

\bibitem{ChenPaniQiu}
{H.~Chen, A.~Pani and W.~Qiu}, {\em A mixed finite element scheme for biharmonic equation with variable coefficient 
and von K\'arm\'an equations}, Commun. Comput. Phys., \textbf{31} (2022), pp. 1434--1466.

\bibitem{ChenHuang2020}
{L.~Chen and X.~Huang}, {\em Nonconforming virtual element method for $2m$th order partial differential equations 
in $\mathbb{R}^{n}$}, Math. Comp.,  \textbf{89(324)} (2020), pp. 1711--1744.

\bibitem{Cheng2008}
M. Cheng and J.~A. Warren, {\em An efficient algorithm for solving the phase field crystal model}, J. Comput. Phys., \textbf{227} (2008), pp. 6241--6248.


\bibitem{Elliott1986}
C.~M. Elliott and S. Zheng, On the Cahn-Hilliard Equation, {\em Archive for Rational Mechanics and Analysis}, \textbf{96} (1986), pp. 339--357.

\bibitem{Engel02}
{G.~Engel, K.~Garikipati, T.J.R.~Hughes, M.G.~Larson, L.~Mazzei and R.L.~Taylor}, {\em Continuous/discontinuous 
finite element approximations of fourth-order elliptic problems in structural and continuum mechanics with 
applications to thin beams and plates, and strain gradient elasticity}, Comput. Methods Appl. Mech. Engrg., 
191 (2002), pp. 3669--3750.

\bibitem{Gudi2010}
{T.~Gudi}, {\em A new error analysis for discontinuous finite element methods for linear elliptic problems}, Math. Comp., \textbf{79 (272)} (2010), pp. 2169--2189.

\bibitem{GudiNeilan2011}
{T.~Gudi and M.~Neilan}, {\em An interior penalty method for a sixth-order elliptic equation}, 
IMA J. Numer. Anal., \textbf{31(4)} (2011), pp. 1734--1753.

\bibitem{HuLinWu2021}
{J.~Hu, T.~Lin and Q.~Wu}, {\em A construction of $C^{r}$ conforming finite element spaces in any dimension}, 
submitted, arXiv:2103.14924.

\bibitem{HuZhang2015}
{J.~Hu and S.~Zhang}, {\em The minimal conforming $H^{k}$ finite element space on $\mathbb{R}^{n}$ rectangular grids}, 
Math. Comp.,  \textbf{84(292)} (2015), pp. 563--579.

\bibitem{HuZhang2017}
{J.~Hu and S.~Zhang}, {\em A canonical construction of $H^{m}$-nonconforming triangular elements}, 
Annals of Applied Mathematics,  \textbf{33(3)} (2017), pp. 266--288.

\bibitem{Huang2020}
{X.~Huang}, {\em Nonconforming virtual element method for 2mth order partial differential equations in $\mathbb{R}^{n}$
with $m>n$}, Calcolo, \textbf{57(42)} (2020).

\bibitem{Kudryashov2020}
N.~A. Kudryashov, {\em Highly dispersive optical solitons of the generalized nonlinear eighth-order Schr\"odinger equation}, Optik, \textbf{206}, 2020, 164335.

\bibitem{Mira2016}
{M.~Schedensack}, {\em A new discretization for $m$th-Laplace equations with arbitrary polynomial degrees}, 
SIAM J. Numer. Anal., \textbf{54(4)} (2016), pp. 2138--2162.

\bibitem{Mozolevski2007}
E. S\"uli and I. Mozolevski, {\em $hp$-version interior penalty DGFEMs for the biharmonic equation}, Comput. Methods Appl. Mech. Engrg., \textbf{196} (2007), pp. 1851--1863.



\bibitem{WangXu2013}
{M.~Wang and J.~Xu}, {\em Minimal finite element spaces for $2m$-th-order partial differential equation 
in $\mathbb{R}^{n}$}, Math. Comp.,  \textbf{82(281)} (2013), pp. 25--43.

\bibitem{Wang_Wise2011}
C. Wang and S. M. Wise, {\em An energy stable and convergent finite-difference scheme for the modified phase field crystal equation}, SIAM J. Numer. Anal., \textbf{49} (2011), pp. 945--969.


\bibitem{Wise2009}
S. M. Wise, C. Wang, and J. S. Lowengrub, {\em An energy-stable and convergent finite difference scheme for the phase field crystal equation}, SIAM J. Numer. Anal, \textbf{47} (2009), pp. 2269--2288.

\bibitem{WuXu1}
{S.~Wu and J.~Xu}, {\em Nonconforming finite element spaces for $2m$th order partial differential equations 
on $\mathbb{R}^n$ simplicial grids when $m=n+1$}, Mathematics of Computation,  \textbf{88(316)} (2019), pp. 531--551.

\bibitem{WuXu2}
{S.~Wu and J.~Xu}, {\em $P_{m}$ Interior Penalty Nonconforming Finite Element Methods for $2m$-th Order PDEs 
in $\mathbb{R}^{n}$}, submitted, arXiv:1710.07678.

\end{thebibliography}
\end{document}